\documentclass[12pt,a4paper]{article}
\usepackage{graphicx}
\usepackage{amsmath,amsthm,amsfonts} 
\usepackage{txfonts,bm,eucal} 

\parskip 0.8ex plus 0.2ex minus 0.2ex
\parindent0ex

    
    \DeclareMathOperator\id{id}
    
    \DeclareMathOperator\GL{GL}

    \DeclareMathOperator\dis{\triangle}
    \DeclareMathOperator\notdis{{\not\!\!\dis}}

    \DeclareMathOperator\End{End}
\DeclareMathOperator\Aut{Aut}


\newcommand{\cG}{{\mathcal G}}
\newcommand{\cH}{{\mathcal H}}
\newcommand{\cM}{{\mathcal M}}

\newcommand{\cR}{{\mathcal R}}
\newcommand{\cA}{{\mathcal A}}

\newcommand{\cC}{{\mathcal C}}
\newcommand{\cS}{{\mathcal S}}
\newcommand{\cT}{{\mathcal T}}

\newcommand{\bP}{{\mathbb P}}


\newcommand{\fC}{{\mathfrak C}}
\newcommand{\fP}{{\mathfrak P}}
\newcommand{\fR}{{\mathfrak R}}

\newcommand\csi{\cS_\mathrm{I}}
\newcommand\csii{\cS_\mathrm{II}}

\let\phi=\varphi



\newcommand{\spmatrix}[1]
{\mbox{\scriptsize\setlength\arraycolsep{0.5\arraycolsep}$\begin{pmatrix}#1\end{pmatrix}$}}


\newtheorem{lem}{Lemma}[section]
\newtheorem{thm}[lem]{Theorem}
\newtheorem{prop}[lem]{Proposition}
\newtheorem{cor}[lem]{Corollary}
\newtheorem*{pro}{Problem}
{\theoremstyle{definition}
\newtheorem{exa}[lem]{Example}
\newtheorem{defi}[lem]{Definition}
}
{\theoremstyle{remark}
\newtheorem{rem}[lem]{Remark}
}

\newenvironment{cond}[1]{%
\settowidth\labelwidth{({#1}{1})}\setlength\leftmargini{\labelwidth}\addtolength\leftmargini{\labelsep}%
\begin{enumerate}}{\end{enumerate}}

\marginparwidth2.8cm

\sloppy

\begin{document}

\title{Geometric structures on finite- and infinite-dimensional Grassmannians}

\author{Andrea Blunck \and Hans Havlicek}

\maketitle

\newcommand\xxx[1]{\marginpar{\footnotesize HH: #1}}

\begin{abstract}
In this paper, we study the Grassmannian of $n$-dimensional subspaces of a
$2n$-dimensional vector space and its infinite-dimensional analogues. Such a
Grassmannian can be endowed with two binary relations (adjacent and distant),
with pencils (lines of the Grassmann space) and with so-called $Z$-reguli. We
analyse the interdependencies among these different structures.
\par~\par\noindent
\emph{Mathematics Subject Classification (2010):} 51A45, 51B99, 51C05\\
\emph{Key words: Grassmann space, Grassmann graph, projective matrix space,
distant graph, regulus, projective line over a ring, chain geometry}
\end{abstract}

\section{Introduction}\label{se:intro}

Let $V$ be a left vector space of arbitrary (not necessarily finite) dimension
over an arbitrary (not necessarily commutative) field $K$. It will always be
assumed that $\dim V>2$. We study the set\footnote{We use the sign $\leq$ for
the inclusion of subspaces and reserve $<$ for strict inclusion.}
\begin{equation*}
    \cG:=\{X\le V\mid X\cong V/X\}
\end{equation*}
of subspaces $X$ of $V$ that are isomorphic to the quotient space $V/X$.
Clearly, this condition is equivalent to saying that $X$ is isomorphic to one
(and hence all) of its complements. We assume that $\cG\neq\emptyset$. So, if
$\dim V$ is finite, then it is an even number $2n$, say, and $\cG$ is just the
Grassmannian of $n$-dimensional subspaces of $V$.
\par

The set $\cG$ can be endowed with several structures such that $\cG$ becomes
the vertex set of a graph or the point set of an incidence geometry. We
investigate the interrelations among these structures and among their
automorphism groups. Section~\ref{se:grass} is devoted to the \emph{adjacency
relation} on $\cG$ and its associated \emph{Grassmann graph}. The
\emph{Grassmann space} on $\cG$ is based on the notion of a \emph{pencil} of
subspaces. We extend results about these two structures, which are well known
for $\dim V<\infty$, to infinite dimension. In Section~\ref{se:distant} we
recall the \emph{distant relation} on $\cG$, where ``distant'' is just another
phrase for ``complementary'', the \emph{distant graph}, and the link with
\emph{chain geometries}. The \emph{$Z$-reguli} (reguli over the centre $Z$ of
the ground field $K$) from Section~\ref{se:reguli} are distinguished subsets of
$\cG$. Our main result (Theorem~\ref{thm:defbar}) says that $Z$-reguli can be
defined in terms of the distant graph. The key tool is a characterisation of
$Z$-reguli in Theorem \ref{thm:Zreg} and a description of adjacency in terms of
the distant graph from \cite{blu+h-03b}. Finally, in Section~\ref{se:conseq} we
state a series of corollaries about automorphisms.

\par

Throughout the paper we prefer the projective point of view, using the language
of points and lines for one- and two-dimensional subspaces. Lower case letters
are reserved for points, the join of subspaces is denoted by $+$. Note that
dimensions are always understood in terms of vector spaces (rather than
projective dimensions).
\par
Although there is no principle of duality for infinite-dimensional vector
spaces, for \emph{some\/} statements in this article a \emph{dual statement\/}
can be obtained as follows: (i)~Reverse all inclusion signs between subspaces.
(ii)~Change the order of subspaces defining a quotient space (e.g.\ $X/Y$ turns
into $Y/X$). (iii)~For any integer $k\geq 0$ replace ``subspace of $V$ with
dimension $k$'' by ``subspace of $V$ with codimension $k$'' and vice versa.
(iv)~Interchange signs for join and meet of subspaces.
\par
For example, conditions (\ref{eq:adjacent1}) and (\ref{eq:adjacent2}) (see
below) are dual to each other. We shall frequently claim that the dual of a
certain result holds. In such case the reader will easily verify that the proof
of the dual result can be accomplished by dualising the initial proof. Clearly,
in case of finite dimension this is a consequence of the usual principle of
duality, otherwise this is due to the specific content of the initial result.

\section{Grassmann graph and Grassmann space}\label{se:grass}

Two elements $X,Y\in\mathcal G$ are called \emph{adjacent} (in
symbols: $X\sim Y$) if
\begin{equation}\label{eq:adjacent1}
    \dim((X+Y)/X) = \dim((X+Y)/Y)=1,
\end{equation}
or, equivalently, if
\begin{equation}\label{eq:adjacent2}
    \dim (X/(X\cap Y)) = \dim (Y/(X\cap Y)) =1.
\end{equation}
This terminology goes back to \textsc{W.-L.\ Chow} \cite{chow-49} in the
finite-dimensional case. Clearly, adjacency is an antireflexive and symmetric
relation. The \emph{Grassmann graph} on $\cG$ is the graph whose vertex set is
$\cG$ and whose edges are the $2$-sets of adjacent vertices. It is studied
(also in the infinite-dimensional case) e.g.\ in \cite{blu+h-03b},
\cite[3.8]{pankov-10a}. In the finite-dimensional case, $\cG$ can also be
viewed as the point set of a \emph{projective geometry of matrices} (compare
\cite[3.6]{wan-96}).

\par
Let $M\le V$ be a subspace such that there is an $X\in \cG$ with $M\le X$ and
$\dim (X/M)=1$. We define the set
\begin{equation*}
    \cG[M\rangle:=\{E\le V\mid M\leq E \mbox{~~and~~} \dim    (E/ M) =1 \}
\end{equation*}
and call it the \emph{star} with centre $M$. Dually, given an $N\le V$ for
which there exists an $X\in\cG$ with $X\le N$ and $\dim (N/X)=1$, we set
\begin{equation*}
    \cG\langle N]:=\{E\le V\mid E\le N \mbox{~~and~~}  \dim (N/E) =1 \}
\end{equation*}
and call it the \emph{top} with carrier $N$.\

\par

Recall our global assumption $\dim V> 2$. It guarantees that a set of subspaces
of $V$ cannot be at the same time a top and a star: The subspaces of any star
cover the entire space $V$, whereas the elements of any top, say $\cG\langle
N]$, cover only the proper subspace $N < V$. Note also that the star with
centre $M$ coincides with the set $\cM:=\{E\in\cG\mid M\le E\}$ only for $\dim
V<\infty$, because here $E\in\cM$ implies $\dim (E/M)=1$. On the other hand, in
the infinite-dimensional case for any integer $n\ge 0$ there exists at least
one $E_n\in \cM$ with $\dim (E_n/M)=n$. It can be obtained as the join of $M$
with $n$ independent points in a complement of $M$.
\par
A set of mutually adjacent elements from $\cG$ is nothing but a clique of the
Grassmann graph. It will also be called an \emph{adjacency clique}. Our first
aim is to show that stars and tops are the maximal adjacency cliques, a fact
which is well known in the finite-dimensional case (see, e.g.,
\cite[Prop.~3.2]{pankov-10a}).

\begin{lem}\label{lem:Madj}
Let $\cG[M\rangle$ be a star. Then the following hold:
\begin{enumerate}
\item $\cG[M\rangle\subseteq\cG$.
\item Any two distinct elements $E,E'\in\cG[M\rangle$ are adjacent.
\item Two adjacent elements $E,E'\in\cG$ belong to $\cG[M\rangle$ if, and
    only if, $E\cap E'=M$.
\end{enumerate}
\end{lem}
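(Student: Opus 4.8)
The plan is to establish the three items in the order (b), (a), (c), since one elementary observation feeds all of them. First I record the fact that if $E,E'\in\cG[M\rangle$ are distinct, then $M\le E\cap E'\le E$ together with $\dim(E/M)=1$ forces $E\cap E'=M$. Granting this, (b) is immediate: for distinct $E,E'\in\cG[M\rangle$ we get $\dim((E+E')/E)=\dim(E'/(E\cap E'))=\dim(E'/M)=1$, and symmetrically $\dim((E+E')/E')=1$, so $E\sim E'$ by (\ref{eq:adjacent1}).

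For (a), fix $E\in\cG[M\rangle$ and, using that $\cG[M\rangle$ is a star, fix $X\in\cG$ with $M\le X$ and $\dim(X/M)=1$, so $X\in\cG[M\rangle$. If $E=X$ there is nothing to prove, so assume $E\ne X$; then $E\cap X=M$ by the observation above. Pick points $p,x$ with $E=M+p$ and $X=M+x$; since $p\le E$ and $p\not\le M$, we have $p\not\le X$ (otherwise $p\le E\cap X=M$), and likewise $x\not\le E$. Hence $N:=E+X$ equals $E+x=X+p$, with both of these sums direct. Choosing a complement $W$ of $N$ in $V$, we obtain $V=E+x+W=X+p+W$ with all sums direct, so $V/E\cong x+W$ and $V/X\cong p+W$ as $K$-vector spaces. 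Now chain isomorphisms: $E\cong X$ (each is $M$ joined with a point), $X\cong V/X$ since $X\in\cG$, $V/X\cong p+W\cong x+W$ since any two points are isomorphic one-dimensional spaces, and $x+W\cong V/E$. Therefore $E\cong V/E$, i.e.\ $E\in\cG$, which is (a).

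Finally (c). If the adjacent pair $E,E'$ lies in $\cG[M\rangle$, then $E\ne E'$ by antireflexivity of $\sim$, and the opening observation yields $E\cap E'=M$. Conversely, if $E\cap E'=M$, then adjacency read through (\ref{eq:adjacent2}) gives $\dim(E/(E\cap E'))=\dim(E'/(E\cap E'))=1$, that is, $\dim(E/M)=\dim(E'/M)=1$; together with $M\le E$ and $M\le E'$ this says precisely $E,E'\in\cG[M\rangle$. The one genuinely non-formal step is the chain of isomorphisms in (a): the point is that all members of a star agree ``up to one dimension'', and --- crucially for the infinite-dimensional case, where no dimension count is available --- this also forces their complements to share a common summand $W$, which is what makes $E\cong V/E$ visible. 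Items (b) and (c) are then just the two equivalent formulations of adjacency unravelled.
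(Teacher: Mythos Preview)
Your proof is correct. Parts (b) and (c) are handled essentially as in the paper: both hinge on the observation that distinct $E,E'\in\cG[M\rangle$ satisfy $E\cap E'=M$, after which adjacency drops out of (\ref{eq:adjacent1}) or (\ref{eq:adjacent2}).

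For part (a) the paper and you reach the same conclusion $E\cong X$ and $V/E\cong V/X$, but by different means. The paper simply argues by dimension: $\dim E=\dim M+1=\dim X$ and $\dim(V/E)=\dim(V/M)-1=\dim(V/X)$, which is valid also for infinite cardinals. You instead build the isomorphisms explicitly by choosing a common complement $W$ of $N=E+X$ and exhibiting $V/E\cong x+W\cong p+W\cong V/X$. Your route is a bit longer but more constructive; note however that your remark ``no dimension count is available'' in the infinite-dimensional case is not quite accurate---cardinal-valued dimension arithmetic (adding or subtracting $1$ from an infinite cardinal) is perfectly legitimate here, and that is precisely what the paper uses. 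Either argument is fine.
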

\begin{proof}
Ad 1.: By definition, $\dim (M/X)=1$ for some $X\in\cG$. Given any
$E\in\cG[M\rangle$ we have $\dim E = \dim M + 1 = \dim X$ and $\dim (V/E) =
\dim M -1 =\dim (V/X)$. So $E\cong X\cong V/X \cong V/E$.

\par
Ad 2.: Let $E,E'$ be distinct elements of $\cG[M\rangle$. Since $E=E\cap E'=E'$
is impossible, we may assume w.l.o.g.\ that $E\cap E'< E$. From this and the
definition of $\cG[M\rangle$, we obtain $M\le E\cap E'< E$. Now $\dim (E/M)=1$
yields $M=E\cap E'$, which implies $E\sim E'$ due to $\dim (E'/M) = 1$ and
(\ref{eq:adjacent2}).

\par
Ad 3.: The ``if part'' is trivial, the ``only if part'' follows from the proof
of 2.
\end{proof}
Dually, the following can be proved:
\begin{lem}\label{lem:Nadj}
Let $\cG\langle N]$ be a top. Then the following hold:
\begin{enumerate}
\item $\cG\langle N]\subseteq\cG$.
\item Any two distinct $E,E'\in\cG\langle N]$ are adjacent.
\item Two adjacent elements $E,E'\in\cG$ belong to $\cG\langle N]$ if, and
    only if, $E+ E'=N$.
\end{enumerate}
\end{lem}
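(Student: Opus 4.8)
The plan is to transcribe the proof of Lemma~\ref{lem:Madj} under the dualisation recipe from the introduction: reverse inclusions, swap numerator and denominator in every quotient, interchange $+$ and $\cap$, and trade ``dimension'' for ``codimension''. Thus the defining data ``$M\le X$ with $\dim(X/M)=1$'' is replaced by ``$X\le N$ with $\dim(N/X)=1$'', the identity $E\cap E'=M$ becomes $E+E'=N$, and the one-dimensionality conditions migrate from (\ref{eq:adjacent2}) to (\ref{eq:adjacent1}).

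For the first assertion I would unravel the definition of a top to fix an $X\in\cG$ with $X\le N$ and $\dim(N/X)=1$. Any $E\in\cG\langle N]$ satisfies $E\le N$ and $\dim(N/E)=1$, so $E$ and $X$ are both hyperplanes of $N$, whence $\dim E=\dim X$; dually, $V/E$ and $V/X$ are both one-dimensional extensions of $V/N$, whence $\dim(V/E)=\dim(V/X)$. Since a vector space is determined up to isomorphism by its dimension, $E\cong X$ and $V/E\cong V/X$, and together with $X\cong V/X$ (as $X\in\cG$) this yields $E\cong V/E$, i.e.\ $E\in\cG$.

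For the second assertion, given distinct $E,E'\in\cG\langle N]$ I would exclude $E=E+E'=E'$ and hence assume $E<E+E'$; since $E,E'\le N$ we get $E<E+E'\le N$, and $\dim(N/E)=1$ forces $E+E'=N$. Then $(E+E')/E=N/E$ and $(E+E')/E'=N/E'$ are one-dimensional, which is precisely condition~(\ref{eq:adjacent1}), so $E\sim E'$. The third assertion follows at once: if $E\sim E'$ and $E+E'=N$, running this computation in reverse gives $\dim(N/E)=\dim(N/E')=1$, so $E,E'\in\cG\langle N]$ (this is the ``if'' part); conversely the relation $E+E'=N$ was already extracted inside the proof of the second assertion (this is the ``only if'' part).

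I do not anticipate a genuine obstacle. The one subtlety, inherited from the finite-dimensional template, is that the dimension bookkeeping in the first assertion must be carried out through codimensions --- equivalently, through the hyperplane $N/E$ of $N$ and the quotient $V/N$ --- rather than by literally subtracting $1$ from $\dim V$, which need not be finite; once it is phrased that way the required isomorphisms are immediate from the classification of vector spaces by dimension, and everything else is a mechanical dualisation of the star argument.
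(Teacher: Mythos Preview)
Your proposal is correct and is exactly the approach the paper takes: the paper does not write out a separate proof of Lemma~\ref{lem:Nadj} at all, but simply states that it ``can be proved dually'' to Lemma~\ref{lem:Madj}, which is precisely the dualisation you carry out. Your handling of the one subtlety---doing the dimension bookkeeping in part~1 via codimensions in $N$ and the quotient $V/N$ rather than by naive subtraction---is the right way to make the dual argument go through in the infinite-dimensional case.
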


\begin{lem}\label{lem:3adj}
Let $A,B,C\in\cG$ be mutually adjacent. Then there is a star or a top
containing them.
\end{lem}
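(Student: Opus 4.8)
The plan is to distinguish cases according to how the three pairwise intersections and joins of $A,B,C$ behave, using Lemmas~\ref{lem:Madj} and~\ref{lem:Nadj} to recognise when we are in a star or a top. First I would set $M_{AB}:=A\cap B$ and $N_{AB}:=A+B$, and similarly for the other two pairs; by (\ref{eq:adjacent2}) each $M$ has codimension $1$ in both subspaces meeting in it, and by (\ref{eq:adjacent1}) each $N$ has dimension one more than each subspace joining to it. The key dichotomy is whether $A\cap B=A\cap C$ or not. If $A\cap B=A\cap C=:M$, then $M\le B\cap C$, and since $\dim(B/M)=1$ and $B\neq C$ we get $B\cap C=M$ as well; thus $A,B,C$ all contain $M$ with codimension $1$, so they lie in the star $\cG[M\rangle$ (which is a genuine star because, e.g., $A\in\cG$ witnesses the defining condition). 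Dually, if $A+B=A+C$, the three subspaces lie in a common top.

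The remaining case is $A\cap B\neq A\cap C$ and $A+B\neq A+C$ (and, by symmetry, the analogous inequalities for the other pairs). Here I would argue that this case cannot occur. The idea is a dimension count inside the subspace $A+B+C$. Each of $A,B,C$ has codimension $1$ in $A+B$, $A+C$, $B+C$ respectively; if the three pairwise joins are pairwise distinct, then $A+B+C$ strictly contains each $A+B$, so $\dim((A+B+C)/A)\ge 2$. On the other hand, $A+B+C=A+(B+C)$ and $B+C$ is spanned by $B$ together with a single point $p\notin B$ from $C$; moreover $A+B$ already contains $B$ and one point outside $B$ (namely any point of $A\setminus B$). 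A careful bookkeeping of which points lie where — exploiting that $\dim(C/(B\cap C))=1$, so $C=(B\cap C)+p$ — should force $\dim((A+B+C)/A)\le 2$ with equality only in a configuration that turns out to coincide with a star or a top. I would organise this by choosing points $a\in A\setminus B$, $c\in C\setminus B$ and examining the line $\cG$-structure they generate; the three adjacencies translate into incidences among the points $a,c$ and the hyperplanes $A\cap B$, $B\cap C$, $A\cap C$ of $A,B,C$.

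The main obstacle I anticipate is precisely ruling out (or correctly classifying) this ``generic'' third case: showing that three mutually adjacent subspaces whose pairwise meets are all different \emph{and} whose pairwise joins are all different simply do not exist, so that one of the two earlier cases always applies. I expect this to reduce to a small finite-dimensional argument — everything happens inside $A+B+C$, which has dimension at most $\dim A+2$ — essentially the statement that in a $3$- or $4$-dimensional vector space three pairwise-adjacent subspaces of the middle dimension form a pencil, i.e. lie in a common star or top. The cleanest route is probably to fix $B$, write $A=(A\cap B)+a$ and $C=(B\cap C)+c$, and observe that $A\sim C$ forces either $a$ and $c$ to span the same point modulo $B$ (pushing towards a common top) or the hyperplanes $A\cap B$ and $B\cap C$ of $B$ to coincide (pushing towards a common star); no third alternative survives, which completes the proof.
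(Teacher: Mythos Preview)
Your plan is correct, and in fact the clean argument you reach in the final paragraph is all you need; the dimension-counting detour in the middle paragraph can be dropped. Concretely: with $A=(A\cap B)+Ka$ and $C=(B\cap C)+Kc$ (where $a,c\notin B$), if $A\cap B\neq B\cap C$ and $Ka+B\neq Kc+B$, then the images of $a$ and $c$ in $V/B$ are linearly independent, so any vector in $A\cap C$ already lies in $B$ and hence in $(A\cap B)\cap(B\cap C)$, which has codimension~$2$ in $A$; this contradicts $A\sim C$. So either $A+B=B+C$ (giving a top) or $A\cap B=B\cap C$ (and then, as you note, all three pairwise meets coincide, giving a star). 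This is a complete proof.

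The paper takes a different route. Instead of setting up a trichotomy and eliminating the ``generic'' case, it assumes only that $A,B,C$ are not in a common star---so w.l.o.g.\ $A\cap B\neq A\cap C$---and shows \emph{directly} that they lie in a common top. One chooses points $a\leq A$ with $a\not\subseteq B\cup C$ and $b\leq A\cap B$ with $b\not\leq C$; the line $a+b\leq A$ is not contained in the hyperplane $A\cap C$ of $A$, so it meets $A\cap C$ in a point $c\neq b$, whence $a\leq b+c\leq B+C$. This forces $A\leq B+C$ and therefore $A+B=A+C=B+C$. Your argument is an algebraic computation split between $V/B$ and $B$; the paper's is a short synthetic argument using a single auxiliary line inside $A$. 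Both are valid and of comparable length, but the paper's version sidesteps the preliminary case split and the symmetry bookkeeping you would otherwise have to do.
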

\begin{proof}
Assume that $A,B,C$ do not belong to any star. This means by Lemma
\ref{lem:Madj}.3 that w.l.o.g.\ $A\cap B\ne A\cap C$. Let $a\leq A$ and $b\leq
A\cap B$ be points with $a\not\subseteq B\cup C$ and $b\not\leq C$. Then the
line $L=a+b\leq A$ does not lie in $A\cap C$, which is a hyperplane of $A$ due
to $\dim (A/(A\cap C))=1$. Consequently, $L$ meets $A\cap C$ in a point $c\ne
b$. So $a\leq L=b+c\leq B+C$. Altogether, $A\le B+C$. This implies $B<A+B\le
B+C$, whence $A+B=B+C$ as $B$ is a hyperplane in $B+C$. Analogously, $A+C=B+C$.
So $A,B,C$ belong to the top $\cG\langle N]$ with $N:=A+B$.
\end{proof}

\begin{prop}\label{prop:adj-cliques}
The maximal adjacency cliques of the Grassmannian $\cG$ are precisely the stars
and tops.
\end{prop}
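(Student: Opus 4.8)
The plan is to show two things: first, every star and every top is a maximal adjacency clique, and second, every maximal adjacency clique is a star or a top. The combination of these yields the proposition.

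For the first part, Lemmas~\ref{lem:Madj} and \ref{lem:Nadj} already tell us that a star $\cG[M\rangle$ and a top $\cG\langle N]$ are adjacency cliques (parts 1 and 2 of each lemma put them inside $\cG$ and make any two distinct members adjacent). It remains to verify \emph{maximality}. So suppose some $F\in\cG$ is adjacent to every element of a star $\cG[M\rangle$; I want to conclude $F\in\cG[M\rangle$. Pick two distinct $E,E'\in\cG[M\rangle$; by Lemma~\ref{lem:Madj}.3 we have $E\cap E'=M$. Since $F\sim E$ and $F\sim E'$ and $F\notin\{E,E'\}$ would have to be handled (the case $F\in\cG[M\rangle$ is what we want, so assume $F$ is new), I would argue that $F\cap E$ is a hyperplane of $F$ and likewise $F\cap E'$, and then use the fact that $E\cap E'=M$ together with adjacency to force $M\le F$ and $\dim(F/M)=1$. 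Concretely, if $M\not\le F$, then since $F$ is adjacent to \emph{all} of $\cG[M\rangle$ one can produce a contradiction by choosing a suitable third element of the star that fails to be adjacent to $F$; this is the step requiring a little care. The dual argument handles tops.

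For the second part, let $\cC$ be a maximal adjacency clique. If $|\cC|\le 2$ the statement is essentially trivial (a single edge lies in some star, e.g.\ using $M=E\cap E'$, and then maximality is contradicted unless the clique is already the whole star — actually one should note a maximal clique has at least three elements, since any edge extends: given adjacent $E,E'$, the star $\cG[E\cap E']\rangle$ contains a third element by its definition together with $\dim V>2$). So assume $\cC$ contains three elements $A,B,C$. By Lemma~\ref{lem:3adj} they lie in a common star $\cG[M\rangle$ or top $\cG\langle N]$; say it is a star $\cG[M\rangle$ (the top case is dual). Now I claim $\cC\subseteq\cG[M\rangle$: take any $D\in\cC$; then $D$ is adjacent to each of $A,B,C$, and since by Lemma~\ref{lem:Madj}.3 we have $A\cap B=M=A\cap C$, the same kind of argument as in the maximality proof above shows $D\cap A=M$, hence $D\in\cG[M\rangle$. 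Thus $\cC\subseteq\cG[M\rangle$, and since $\cG[M\rangle$ is itself an adjacency clique, maximality of $\cC$ forces $\cC=\cG[M\rangle$.

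The main obstacle is the little local argument, used twice, that an element $F$ adjacent to three pairwise-intersecting-in-$M$ members of a star must itself meet (one of) them in $M$ — i.e.\ ruling out the possibility that $F$ is adjacent to several star members while intersecting them in mutually different hyperplanes, or while not containing $M$ at all. The resolution is the dimension count: inside $A$, the subspaces $A\cap B$ and $A\cap C$ are distinct hyperplanes meeting in $M$ (which has codimension $2$ in $A$... wait, codimension $1$ — since $\dim(A/M)=1$, so $A\cap B=M=A\cap C$ already, so there is nothing to reconcile once $D$ is adjacent to two star members with the same intersection $M$); the real work is showing $D\cap A=M$ rather than some other hyperplane of $D$ not containing $M$, which one extracts by playing off adjacency to a third, generically chosen element of the star as in the proof of Lemma~\ref{lem:3adj}. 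Once that is in hand, everything else is bookkeeping with Lemmas~\ref{lem:Madj}, \ref{lem:Nadj}, and \ref{lem:3adj}.
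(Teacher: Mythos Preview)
Your overall plan is sound, but the second part has a genuine gap. After choosing three clique members $A,B,C$ that (by Lemma~\ref{lem:3adj}) lie in a common star $\cG[M\rangle$, you claim that any further $D\in\cC$ must also lie in $\cG[M\rangle$. This fails in general: the three elements $A,B,C$ may \emph{also} lie in a common top $\cG\langle N]$ (this happens exactly when $A/M,B/M,C/M$ are collinear points of $V/M$, i.e.\ $A+B=A+C=B+C=N$), and then a $D\in\cG\langle N]\setminus\cG[M\rangle$ is adjacent to all three without belonging to the star. Your proposed fix---``playing off adjacency to a third, generically chosen element of the star''---does not help here, because $D$ is only known to be adjacent to the members of $\cC$, not to arbitrary elements of $\cG[M\rangle$; you cannot freely import extra star elements into the argument. (The same confusion appears in your first part, where the direct maximality argument \emph{does} get to use all of $\cG[M\rangle$, but that is a different hypothesis.)

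The paper sidesteps this by starting from \emph{two} elements $A,B\in\cA$, which determine a unique star $\cS=\cG[A\cap B\rangle$ and a unique top $\cT=\cG\langle A+B]$. If $\cA\subseteq\cS\cap\cT$ we are done; otherwise a third $C\in\cA$ lies in $\cS\setminus\cT$ or $\cT\setminus\cS$ by Lemma~\ref{lem:3adj}, and this asymmetry is then exploited: any $X\in\cA\setminus\cS$ would, by two applications of Lemma~\ref{lem:3adj} (to $A,B,X$ and $A,C,X$), land in two \emph{distinct} tops through $A$, forcing $X=A$, a contradiction. This two-element start is the missing idea; once you have it, the maximality of stars and tops also follows cleanly from the inclusion statement (your direct maximality argument is then unnecessary).
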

\begin{proof}
(a) \emph{We show that any adjacency clique $\cA\subseteq\cG$ is a subset of a
star or a subset of a top.} For $|\cA|<2$ the assertion obviously holds.
Otherwise there exist two distinct elements $A, B \in \cA$. We read off from
Lemma~\ref{lem:Madj}.3 and Lemma~\ref{lem:Nadj}.3 that they belong to the star
$\cG[A \cap B\rangle =: \cS$ and to the top $\cG\langle A + B] =: \cT$. If
$\cA$ is contained in $\cS\cap \cT$ then we are done. Otherwise there exists a
$C\in \cA\setminus(\cS\cap \cT)$. We infer from this and from
Lemma~\ref{lem:3adj}, applied to $A, B, C$, that $C$ belongs to the symmetric
difference of $\cS$ and $\cT$. Hence there are two cases:

\par
\emph{Case 1.} $C \in \cS \setminus \cT$: We claim that $\cA \subseteq \cS$.
For if there were an $X \in \cA \setminus \cS$ then we could apply
Lemma~\ref{lem:3adj} first to $A, B, X$ and then to $A,C, X$. This would give
$X \in \cG\langle A + B] \cap \cG\langle A + C] = \{A\} \subset \cS$, an
absurdity.
\par
\emph{Case 2.} $C \in \cT \setminus \cS$: Here $\cA \subseteq \cT$ follows
dually to Case 1.

\par
(b) Let $\cS=\cG[M\rangle$ be any star. By Lemma~\ref{lem:Madj}.2, the star
$\cS$ is an adjacency clique. Furthermore, let $\cA\subseteq\cG$ be an
adjacency clique containing $\cS$. We infer from (a) that $\cA$ is a subset of
a star or a subset of a top. However the latter cannot occur, because there is
no top containing $\cS$. So there is a star, say $\cS' = \cG[M'\rangle$, with
$\cS\subseteq \cA \subseteq\cS'$. There are two distinct elements $A,B\in\cS$.
From Lemma~~\ref{lem:Madj}.3 we infer $M=A\cap B = M'$. Hence $\cS = \cA =
\cS'$ which shows that $\cS$ is a maximal adjacency clique.
\par
Dually, any top is a maximal adjacency clique.
\par
(c) Given any maximal adjacency clique $\cA\subseteq\cG$, it is contained in a
star $\cS$ or in a top $\cT$ by (a). The maximality of $\cA$ implies that
$\cA=\cS$ or $\cA=\cT$.
\end{proof}
Let $M,N$ be subspaces of $V$ such that
\begin{equation}\label{eq:M-N}
    \mbox{ there is an } X\in\cG \mbox{ with }
  M\leq X\leq N \mbox{ and } \dim(X/M)=\dim(N/X)=1.
\end{equation}
Then
\begin{equation}\label{eq:bueschel}
  \cG[M,N]:=\{X\in\cG\mid M< X< N\}
\end{equation}
is called a \emph{pencil} in $\cG$. If $\dim V=2n$ is finite then
(\ref{eq:M-N}) is equivalent to $M\leq N$, $\dim M=n-1$, and $\dim N=n+1$.

\par
Any pencil $\cG[M,N]$ is contained in the star $\cG[M\rangle$. As stars are
adjacency cliques of $\cG$ by Lemma~\ref{lem:Madj}.1, so are pencils. Let $\fP$
denote the set of all pencils. Then $(\cG,\fP)$ can be viewed as a partial
linear space, i.e.\ a point-line incidence geometry with ``point set'' $\cG$
and ``line set'' $\fP$ such that any two ``points'' are joined by at most one
``line''. This geometry in called the \emph{Grassmann space} on $\cG$ (see, for
example, \cite[3.1]{pankov-10a} for the finite-dimensional case).

\par
Two distinct elements $X,Y$ of $\cG$ are adjacent if, and only if, they are
``collinear'' in $(\cG,\fP)$, i.e., if they belong to a common pencil (which
then has to be $\cG[X\cap Y,X+Y]$). Stars and tops are the maximal singular
subspaces of $(\cG,\fP)$, i.e., subspaces in which any two distinct points are
collinear. More precisely, they are projective spaces. An underlying vector
space of a star $\cG[M\rangle$ is the quotient space $V/M$, whereas for a top
$\cG \langle N]$ the dual space $N^*$ of $N$ plays this role. Consequently, all
``lines'' of the Grassmann space $(\cG,\fP)$ contain at least three (actually
$|K|+1$) ``points''.

\par
We saw in the preceding paragraph that the adjacency relation $\sim$ can be
defined using the concept of pencil only. The subsequent Theorem
\ref{thm:penadj} implies that pencils can be defined in $\cG$ by using the
relation $\sim$ only.

\begin{thm}\label{thm:penadj}
The pencils of the Grassmann space $(\cG,\fP)$ are exactly the sets with more
than one element that are intersections of two distinct maximal adjacency
cliques.
\end{thm}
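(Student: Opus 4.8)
The plan is to lean on Proposition~\ref{prop:adj-cliques}, which already identifies the maximal adjacency cliques with the stars and tops, and then to add the elementary observation that a pencil is precisely the intersection of a suitable star with a suitable top. So the statement to be proved splits into: (i) every pencil is an intersection of two distinct maximal adjacency cliques and has more than one element, and (ii) every such intersection with more than one element is a pencil.

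For (i) I would first prove the identity $\cG[M,N]=\cG[M\rangle\cap\cG\langle N]$ for every pencil $\cG[M,N]$. The key bookkeeping is $\dim(N/M)=2$: by~(\ref{eq:M-N}) there is a witness $X_0\in\cG$ with $M\le X_0\le N$ and $\dim(X_0/M)=\dim(N/X_0)=1$, so the dimensions add up to $2$; hence for \emph{any} $X$ with $M<X<N$ the two positive integers $\dim(X/M)$ and $\dim(N/X)$ sum to $2$ and therefore both equal $1$, which places $X$ into $\cG[M\rangle\cap\cG\langle N]$. The reverse inclusion is just unwinding the definitions of star and top. Now $\cG[M\rangle$ is a star and $\cG\langle N]$ is a top, so they are distinct maximal adjacency cliques (a set cannot be both, by the global assumption $\dim V>2$); and $\cG[M,N]$ has at least $|K|+1\ge 3>1$ elements, being a line of the Grassmann space. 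This settles~(i).

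For~(ii) let $\cA\ne\cB$ be maximal adjacency cliques with $|\cA\cap\cB|>1$, and fix two distinct $A,B\in\cA\cap\cB$; being in a common adjacency clique, they are adjacent. By Proposition~\ref{prop:adj-cliques} each of $\cA,\cB$ is a star or a top. If both were stars, then Lemma~\ref{lem:Madj}.3, applied inside each of them to $A$ and $B$, would force both centres to equal $A\cap B$, whence $\cA=\cB$, a contradiction; the case of two tops is excluded dually using Lemma~\ref{lem:Nadj}.3. Hence, up to relabelling, $\cA$ is a star and $\cB$ a top, and the same two lemmas give $\cA=\cG[A\cap B\rangle$ and $\cB=\cG\langle A+B]$. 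It remains to check that $M:=A\cap B$ and $N:=A+B$ satisfy~(\ref{eq:M-N}): the element $X:=A\in\cG$ works, since $\dim(A/(A\cap B))=1$ by~(\ref{eq:adjacent2}) and $\dim((A+B)/A)=1$ by~(\ref{eq:adjacent1}). Thus $\cG[A\cap B,A+B]$ is a genuine pencil, and by part~(i) it equals $\cG[A\cap B\rangle\cap\cG\langle A+B]=\cA\cap\cB$.

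The only mildly delicate points are the dimension count $\dim(N/M)=2$, which is exactly what makes the star–top intersection collapse onto the pencil and nothing larger, and the systematic use of the third parts of Lemmas~\ref{lem:Madj} and~\ref{lem:Nadj} to eliminate the two ``same-type'' configurations and to pin down the centre and the carrier. Neither of these is a genuine obstacle; the argument is essentially a clean assembly of results already in hand.
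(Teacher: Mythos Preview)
Your proof is correct and follows essentially the same route as the paper's: both directions rest on Proposition~\ref{prop:adj-cliques} together with the identity $\cG[M,N]=\cG[M\rangle\cap\cG\langle N]$, and the converse is handled by a star/top case split. The only notable tactical difference is in ruling out the ``two stars'' (resp.\ ``two tops'') case: the paper argues directly that any $E$ in $\cG[M\rangle\cap\cG[M'\rangle$ must equal $M+M'$, forcing the intersection to be a singleton, whereas you invoke Lemma~\ref{lem:Madj}.3 on a pair $A,B$ to get $M=A\cap B=M'$; both are equally short and valid.
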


\begin{proof}
Given a pencil as in (\ref{eq:bueschel}) we noted already that $|\cG[M,N]|\ge
3$. The second required property follows from $\cG[M,N]=\cG[M\rangle \cap
\cG\langle N]$ and Proposition~\ref{prop:adj-cliques}.

\par
Conversely, let $\cS$ with $|\cS|\ge 2$ be the intersection of two maximal
adjacency cliques. By Proposition~\ref{prop:adj-cliques} we are led to the
following cases:
\par
\emph{Case 1.} $\cS$ is the intersection of two distinct stars, say
$\cS=\cG[M\rangle\cap\cG[M'\rangle$. Choose an $E\in \cS$. Then $M<M+M'\le E$,
so $E=M+M'$ as $\dim (E/M)=1$. This means that $E$ is uniquely determined, a
contradiction. Dually, $\cS$ cannot be the intersection of two distinct tops.
\par
\emph{Case 2.} $\cS$ is the intersection of a star and a top, say
$\cS=\cG[M\rangle\cap\cG\langle N]$. If $M\not\le N$ then
$\cG[M\rangle\cap\cG\langle N]=\emptyset$ which is impossible. So $M\le N$ and
hence $\cG[M\rangle\cap\cG\langle N]=\cG[M,N]$.
\end{proof}
Up to here the dimension of $V$ did not play an essential role. Yet there are
properties of the Grassmann graph and the Grassmann space on $\cG$ which depend
on the dimension of $V$ being finite or not.

\begin{rem}\label{rem:disconn}
By \cite[2.3]{blu+h-03b}, the Grassmann graph $(\cG,\sim)$ is connected if, and
only if, $\dim V=2n<\infty$. In this case the diameter of the graph is $n$. For
infinite dimension of $V$ the connected component of $X\in\cG$ equals
    \begin{equation}\label{eq:zsh}
        \{E\in\cG \mid \dim (E/(E\cap X)) = \dim (X/(E\cap X)) <\infty\}
    \end{equation}
and its diameter is infinite.
\end{rem}

\section{Distant graph and chain geometries}\label{se:distant}

We say that $X,Y\in\cG$ are \emph{distant} (in symbols: $X\,\dis\,Y$) whenever
they are complementary, i.e., $X\oplus Y=V$. Also this is an antireflexive and
symmetric relation. The \emph{distant graph} on $\cG$ is the graph whose vertex
set is $\cG$ and whose edges are the $2$-sets of distant vertices. See
\cite{blu+h-01a}, \cite{blu+h-03b}. The cliques of the distant graph will be
called \emph{distant cliques}.

\begin{rem}\label{rem:disadj}
In \cite{blu+h-03b} the following is proved:
\begin{enumerate}
\item\label{rem:disadj.1} The relation $\sim$ can be defined by using
    $\dis$ only \cite[Thm.~3.2]{blu+h-03b}: Two different elements
    $A,B\in\cG$ are adjacent if, and only if, there is a $C\in
    \cG\setminus\{A,B\}$ such that for all $X\in\cG$ with $X\dis C$ also
    $X\dis A$ or $X\dis B$ holds.
\item If $\dim V=2n<\infty$, the relation $\dis$ can be defined by $\sim$
    only: The elements $X,Y\in\cG$ are distant if, and only if, the
    distance of $X$ and $Y$ in the Grassmann graph on $\cG$ equals $n$
    (which is the diameter of the Grassmann graph); this follows from
    formula (3) in \cite{blu+h-03b}. Therefore some authors
     speak of \emph{opposite} rather than distant vertices of the Grassmann
    graph. Cf.\ \cite[3.2.4]{pankov-10a}.
\item If $\dim V=\infty$, the relation $\dis$ cannot be defined by $\sim$
    only: There are permutations of $\cG$ leaving $\sim$ invariant but
    \emph{not} leaving $\dis$ invariant \cite[Ex.~4.3]{blu+h-03b}.
     (E.g., the $\kappa$ from Example \ref{exa:nodistiso} has this
     property.)
\end{enumerate}
\end{rem}
The relation ``distant'' comes from ring geometry; see, among others,
\cite[p.~15]{blu+he-05}, \cite[Def.~1.2.1]{herz-95}, and
\cite[Def.~3.1]{veld-95}. We therefore recall some definitions and results. For
any associative ring $S$ with $1$ the general linear group $\GL(2,S)$ acts on
the free left module $S^2$ and on the lattice of its submodules. The
\emph{projective line over $S$} is the orbit
\begin{equation*}
  \bP(S):=S(1,0)^{\GL(2,S)}
\end{equation*}
of the free cyclic submodule $S(1,0)$ under this action. On $\bP(S)$, the
antireflexive and symmetric relation $\dis$ (\emph{distant}) is defined by
\begin{equation*}
  \dis:=(S(1,0),S(0,1))^{\GL(2,S)}
\end{equation*}
See \cite{blu+he-05} or \cite{herz-95} for a detailed exposition. We now adopt
the additional assumption that $S$ contains a field $F$ (with $1_F=1_S$) as a
proper subring. Then $\bP(F)$ can be embedded in $\bP(S)$ via $F(a,b)\mapsto
S(a,b)$. The orbit
\begin{equation*}
     \fC(F,S):=\bP(F)^{\GL(2,S)}
\end{equation*}
is called the set of \emph{$F$-chains} in $\bP(S)$, and the incidence geometry
$\Sigma(F,S):=(\bP(S),\fC(F,S))$ is called the \emph{chain geometry} over
$(F,S)$.

\par
Originally, chain geometries have been studied in the case that $S$ is an
$F$-algebra, i.e., the field $F$ is contained in the centre of $S$ (see
\cite{blu+he-05}, \cite{herz-95}). Then, given three mutually distant points,
there is a \emph{unique} chain containing them. If $F$ is not in the centre of
$S$, then, in general, there is more than one chain through three mutually
distant points. See \cite{blu+h-00a}, where we used the term ``generalized
chain geometry'' in order to emphasise the deviations from the original
setting. The crucial observation for us is as follows:

\begin{rem}\label{rem:dist-chain}
Two distinct points of $\bP(S)$ are distant if, and only if, they are on a
common $F$-chain \cite[Lemma~2.1]{blu+h-00a}.
\par

Observe that this characterisation provides a definition of the distant
relation in terms of $F$-chains. It does not depend on the chosen field
$F\subset S$.
\end{rem}
The set $\cG$ can be interpreted as the projective line over the endomorphism
ring of a vector space:

\begin{rem}\label{rem:isoP(R)}
Let $U$ and $U'$ be arbitrary distant elements of $\cG$, and let $R=\End_K(U)$
be the endomorphism ring of $U$. Furthermore let $\lambda:U\to U'$ be a linear
isomorphism. By \cite[Thm.~2.4]{blunck-99} the following assertions
hold:\footnote{The results in \cite{blunck-99} are stated in terms of $U\times
U$. We rephrase them by virtue of the linear isomorphism which maps
$(u_0,u_1)\in U\times U$ to $u_0+u_1^\lambda\in V=U\oplus U'$.}
\begin{enumerate}
\item\label{rem:isoP(R).1} The mapping
    \begin{equation}\label{eq:Phi}
      \Phi:\bP(R) \to \cG: R(\alpha,\beta)\mapsto
      U^{(\alpha,\,\beta)}=\{u^\alpha+ u^{\beta\lambda}\mid u\in U\}
    \end{equation}
    is a well defined bijection.
\item\label{rem:isoP(R).2} Points $p,q\in\bP(R)$ are distant if, and only
    if, their images $p^\Phi,q^\Phi$ are distant (i.e.\ complementary) in
    $\cG$.
\item\label{rem:isoP(R).3} $\Phi$ induces an isomorphism of group actions
    \begin{equation*}
        (\bP(R),\GL(2,R))\to (\cG,\Aut_K(V))
    \end{equation*}
    as follows: For any
    $\psi=\spmatrix{\alpha&\beta\\\gamma&\delta}\in\GL(2,R)$ let
    $\hat\psi:V\to V$ be defined by
    \begin{equation*}
        u_0+u_1^\lambda\mapsto (u_0^\alpha+u_1^{\gamma}) + (u_0^{\beta} +
    u_1^{\delta})^\lambda\mbox{~~for all~~} u_0,u_1\in U.
    \end{equation*}

     Then $\GL(2,R)\to\Aut_K(V): \psi\mapsto\hat\psi$ is an isomorphism of
    groups satisfying $\psi\Phi=\Phi\hat\psi$.
\end{enumerate}
\end{rem}
In the case that $\dim V=2n<\infty$ we can identify $R$ with the ring of
$n\times n$ matrices over $K$. Then (\ref{eq:Phi}) shows once more that the
Grassmannian $\cG$ can be identified with the point set of the projective
geometry of square matrices studied in \cite{wan-96}, since
$U^{(\alpha,\beta)}$ equals the left row space of the $n\times 2n$ matrix
$(\alpha,\beta)$.

\par
The definition of $\Phi$ in (\ref{eq:Phi}) relies on the choice of $U$, $U'$,
and $\lambda$. However, this choice is immaterial: For, if we select instead
any two distant elements $\tilde U$, $\tilde U'$ of $\cG$ and a linear
isomorphism $\tilde \lambda: \tilde U\to \tilde U'$ then we obtain a bijection
$\tilde\Phi$ of the projective line over the endomorphism ring $\tilde R$ of
$\tilde U$ onto $\cG$ like the one in (\ref{eq:Phi}). There exists a linear
isomorphism $\iota:U\to\tilde U$, whence the mapping $R\to\tilde
R:\alpha\mapsto\iota^{-1}\alpha\iota=:\tilde\alpha$ is an isomorphism of rings,
and the bijection $\bP(R)\to\bP(\tilde R):R(\alpha,\beta)\mapsto \tilde
R(\tilde\alpha,\tilde\beta)$ takes distant points to distant points in both
directions. Further, the linear automorphism $V\to V: u_0+u_1^\lambda\mapsto
u_0^\iota+u_1^{\iota\tilde\lambda}$ (with $u_0,u_1\in U$) will send any
$R(\alpha,\beta)^\Phi\in\cG$ to $\tilde
R(\tilde\alpha,\tilde\beta)^{\tilde\Phi}\in\cG$. (This observation generalises
Remark~\ref{rem:isoP(R)}.\ref{rem:isoP(R).3}.)

\begin{rem}\label{rem:durchmesser}
By virtue of (\ref{eq:Phi}) we obtain the following: The distant graph on $\cG$
is connected; it has diameter $3$ for $\dim V=\infty$
\cite[Thm.~5.3]{blu+h-01a} and diameter $2$ for $\dim V<\infty$. The second
part of the last assertion is immediate from \cite[Prop.~1.1.3]{herz-95} and
\cite[2.6]{veld-85}.
\end{rem}
We shall see below that the projective line $\bP(R)$, $R=\End_K(U)$, can be
considered as the point set of a chain geometry $\Sigma(F,R)$ in at least one
way. As we noted above, the distant relation is then definable in terms of
$F$-chains. Taking into account Remark~\ref{rem:isoP(R)}, the following
converse question arises:

\begin{pro}
Given a subfield $F$ of the endomorphism ring $R=\End_K(U)$ is it possible to
define the $\Phi$-images of $F$-chains in terms of the distant graph
$(\cG,\dis)$?
\end{pro}
A major obstacle in solving this problem is that for arbitrary $F$ an explicit
description of the $\Phi$-images of $F$-chains even in terms of the projective
space on $V$ seems to be unknown. The situation seems less intricate for the
following class of examples: Let $F$ be any subfield of $K$. We embed $F$ in
$R$ by fixing a basis $(b_i)_{i\in I}$ of $U$ and mapping $a\in F$ to the
unique endomorphism of $U$ with $b_i\mapsto a b_i$ for all $i\in I$. The case
$F=K$ was detailed in \cite{blunck-00a}. Here the $\Phi$-images of $K$-chains
are \emph{reguli}. However the definition of a regulus in \cite{blunck-00a} is
rather involved in its most general form\footnote{In
\cite[Def.~2.3]{blunck-00a} the following minor revision has to be made in
order to assure the results from \cite{blunck-00a}: Replace the assumption that
$(T_i)_{i\in I}$ is a minimal set of lines generating the vector space $V$ by
the stronger assumption that $(T_i)_{i\in I}$ is a family of lines such that
$V=\bigoplus_{i\in I}T_i$.}, i.e., when $K$ is a proper skew field and $\dim
U=\infty$. We therefore focus on the case when $F$ equals the centre $Z$ of
$K$. Here the choice of a basis from before is immaterial, since the
endomorphism corresponding to $a\in Z$ is simply
\begin{equation}\label{eq:a.id}
    a\cdot\id\in R=\End_K(U).
\end{equation}
The subsets of $\cG$ that correspond under $\Phi$ to $Z$-chains will be
exhibited in the next section.

\section{$Z$-Reguli}\label{se:reguli}

We start with a definition of $Z$-reguli. Their connection with the
$\Phi$-images of $Z$-chains will only be shown in Theorem~\ref{thm:Zchain}.
Note that most of the following proofs are considerably easier in the case of
finite dimension.

\begin{defi}\label{def:regulus}
A \emph{$Z$-regulus} is a subset $\cR$ of $\cG$ satisfying the following
conditions:
\begin{cond}{R}
\item\label{def:regulus.1} $\cR$ is a distant clique with at least three
    elements.

\item\label{def:regulus.2} If three mutually distinct elements of $\cR$
    meet\footnote{We say that two subspaces of $V$ \emph{meet} each other if
    they have a common point.} a line then all elements of $\cR$ meet that
    line.

\item\label{def:regulus.3} $\cR$ is not properly contained in any subset of
    $\cG$ satisfying conditions \ref{def:regulus.1} and \ref{def:regulus.2}.
\end{cond}
\end{defi}
During our investigation we shall frequently come across subsets of $\cG$ that
satisfy conditions \ref{def:regulus.1} and \ref{def:regulus.2}, but not
necessarily the maximality condition \ref{def:regulus.3}. Such a set will be
termed as being a \emph{partial $Z$-regulus}. A line $L$ that meets all
elements of a partial $Z$-regulus $\cR$ is called a \emph{directrix} of $\cR$.
Note that this does not necessarily mean that each point of $L$ is on some
element of $\cR$.

\begin{rem}\label{rem:segre}
Let $U$ and $U'$ be distant elements of $\cG$ and let $\lambda:U\to U'$ be a
linear isomorphism. There are two distinguished families of subspaces of $V$
which are entirely contained in the set
\begin{equation*}
    Q:=\{r u+ s u^\lambda\mid u\in U,\; r,s\in K\}.
\end{equation*}
The first family $\csi$ comprises all subspaces of the form
\begin{equation}\label{eq:1st}
    L_u:=\{ru+su^\lambda\mid  r,s\in K\} \mbox{~~with~~} u\in U\setminus\{0\},
\end{equation}
and we call them subspaces of \emph{first kind}. The second family $\csii$ is
formed by the subspaces of \emph{second kind}. They are given as
\begin{equation}\label{eq:T(x,y)}
    T^{(x,\,y)}:=\{xu+yu^\lambda\mid u\in U\} \mbox{~~with~~}  (x,y)\in Z^2\setminus\{(0,0)\}.
\end{equation}
Up to minor notational differences the following was shown in
\cite[Thm.~1]{havl-93a}:
\begin{enumerate}
\item\label{rem:segre.1} The subspaces of second kind are precisely the
    \emph{transversal subspaces} of $\csi$, i.e., those subspaces $T\leq V$
    for which a bijection of $\csi$ to the point set of $T$ is given by the
    assignment $L\, (\in\csi)\mapsto L\cap T$.

\item\label{rem:segre.2} If $L\subseteq Q$ is a line then either $L$ is a
    subspace of first kind or $L$ is contained in a subspace of second
    kind.
\end{enumerate}
The first result can be rephrased as follows: Any two subspaces of different
kind have a unique point in common. Each point which is on some subspace of
second kind is on a unique subspace of first kind. (Compare also with
\cite[Satz~10.1.4]{blu+he-05}, where similar results are derived under stronger
assumptions.) We add in passing that in \cite{havl-93a} the set of all points
$p$ with $p\subseteq Q$ is called a \emph{Segre manifold}. However, we shall
not be concerned with this notion.
\end{rem}
In order to prove that $\csii$ is a $Z$-regulus we need an auxiliary
result.

\begin{lem}\label{lem:Zreg}
Let $E$ be any element of a partial $Z$-regulus $\cR$. Then a bijection from
the set of directrices of $\cR$ to the point set of $E$ is given by the
assignment $L\mapsto L\cap E$. Consequently, each point of $E$ is on a unique
directrix of $\cR$.
\end{lem}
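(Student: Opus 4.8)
The plan is to show two things: first that the map $L\mapsto L\cap E$ is well defined and injective, and second that it is surjective onto the point set of $E$. Injectivity is the easy direction and follows from the defining property \ref{def:regulus.1}: if two distinct directrices $L,L'$ met $E$ in the same point $p$, then $E$, together with any two other elements of $\cR$, would be pairwise distant, yet all of $E$, $L$, $L'$ pass through $p$. More carefully, pick any two further elements $E_1,E_2\in\cR\setminus\{E\}$ (possible since $|\cR|\ge 3$). Both $L$ and $L'$ meet all three of $E,E_1,E_2$. If $L\cap E=L'\cap E=\{p\}$ with $L\ne L'$, consider the point $L\cap E_1$: it lies in $E_1$, and it lies in the plane $L+L'$ (which contains $p$). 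Since $p\in E$ and $E\cap E_1=0$ (distant), the only way to avoid a contradiction is to track how $L$ and $L'$ each pierce $E_1$. I expect the clean argument is: $L\mapsto L\cap E$ injective because a directrix is determined by the three points in which it meets $E,E_1,E_2$ — and $L\cap E$ together with the requirement of meeting $E_1$ and $E_2$ (each in a unique point, since $L\cap E_i$ is forced to be a single point as $L\not\le E_i$) pins down $L$; but actually two of these points already determine the line. So injectivity reduces to: if $L\cap E=L'\cap E$ then also $L\cap E_1=L'\cap E_1$, whence $L=L'$. This last implication is where condition \ref{def:regulus.2} enters, applied to the line through $L\cap E$ and $L\cap E_1$.

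For surjectivity, let $p$ be an arbitrary point of $E$; I must produce a directrix through $p$. Choose two further elements $E_1,E_2$ of $\cR$; since $E\dis E_1$ and $E\dis E_2$, the subspace $p$ has a well-defined "image" in $E_1$ and in $E_2$ under the projections of $V=E\oplus E_1$ and $V=E\oplus E_2$ — but these need not agree, and that is not quite the construction I want. Instead, the right move is to invoke Remark~\ref{rem:segre} via Remark~\ref{rem:isoP(R)}: realise $E,E_1$ as the distant pair $U,U'$ (with $\lambda$ the isomorphism coming from, say, the projection of $E_2$), so that $\cR$ sits inside the projective line $\bP(R)$, and the three elements $E,E_1,E_2$ correspond to three pairwise distant points there. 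The $Z$-regulus conditions are preserved, so it suffices to produce, for the model set $Q$ of Remark~\ref{rem:segre}, a subspace of first kind $L_u$ through $p$: but that is exactly the content of the rephrased Remark~\ref{rem:segre}.\ref{rem:segre.1}, namely each point on some subspace of second kind lies on a unique subspace of first kind. So I need to know that $p\in E$ lies on some subspace of second kind, which holds because $E$ is itself a subspace of second kind once the model is set up so that $E=T^{(1,0)}$. It then remains to check that the first-kind subspace $L_u\ni p$ is actually a directrix of $\cR$, i.e. meets every element of $\cR$, not just $E,E_1,E_2$: this is precisely condition \ref{def:regulus.2}, since $L_u$ meets the three distinct elements $E,E_1,E_2$ of $\cR$.

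The main obstacle I anticipate is the bookkeeping in passing to the model $\bP(R)$ and verifying that a $Z$-regulus (or partial $Z$-regulus) really does land among configurations controlled by Remark~\ref{rem:segre} — in particular that conditions \ref{def:regulus.1} and \ref{def:regulus.2} survive the identification, and that "meets a line" on the $\cG$ side corresponds faithfully to the geometry of $Q$. A secondary subtlety, flagged in the statement itself, is that a directrix need not have all of its points covered by elements of $\cR$; so in the surjectivity argument I must be careful to obtain the directrix through $p$ from the three chosen elements $E,E_1,E_2$ and condition \ref{def:regulus.2}, rather than assuming the full regulus covers $p$. Once the model is in place, both directions are short, and the "consequently" — that each point of $E$ lies on a \emph{unique} directrix — is just the conjunction of injectivity and surjectivity already established.
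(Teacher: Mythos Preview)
Your surjectivity argument via the Segre model is correct but heavy machinery for what is actually a one-line fact; more seriously, your injectivity argument has a genuine gap. You claim that ``if $L\cap E=L'\cap E$ then also $L\cap E_1=L'\cap E_1$'' and that this follows from condition \ref{def:regulus.2} applied to the line through $L\cap E$ and $L\cap E_1$. But that line is $L$ itself, and \ref{def:regulus.2} only tells you that $L$ meets every element of $\cR$ --- which you already knew, since $L$ is a directrix. Nothing in \ref{def:regulus.2} forces $L'\cap E_1$ to coincide with $L\cap E_1$; the condition simply does not speak to uniqueness of intersection points.

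The paper's proof avoids both the detour through Remark~\ref{rem:segre} and the injectivity difficulty with a single elementary observation: pick $E',E''\in\cR\setminus\{E\}$. For any point $p\le E$ we have $p\not\le E'$ and $p\not\le E''$ (since $E\dis E',E''$), and from $V=E'\oplus E''$ there is a \emph{unique} line $L'$ through $p$ meeting both $E'$ and $E''$. By \ref{def:regulus.2} this $L'$ is a directrix, which gives surjectivity; and any directrix through $p$ must meet $E'$ and $E''$, hence must equal $L'$, which gives injectivity. The decomposition $V=E'\oplus E''$ is what you were missing --- it supplies both existence and uniqueness of the transversal line in one stroke, with no need to realise anything inside the Segre model. (Note also that Lemma~\ref{lem:Zreg} is used in the proof of Proposition~\ref{prop:Sii}, so invoking the full dictionary between $Z$-reguli and $\csii$ here would risk circularity; your argument stays just on the safe side of this, but only because you use \ref{def:regulus.2} directly rather than the identification of $\cR$ with $\csii$.)
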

\begin{proof}
There are $E', E''\in \cR \setminus \{E\}$ with $E'\ne E''$ and hence $E'\dis
E''$.

\par
First, let $L$ be a directrix of $\cR$. As $L$ meets the distant subspaces $E$
and $E'$ the intersection $L\cap E$ is a point. Thus $L\mapsto L\cap E$ gives a
well-defined mapping from the set of directrices of $\cR$ to the point set of
$E$.

\par
Next, let $p\leq E$ be a point. We have $V=E'\oplus E''$, $p\not\leq E'$, and
$p\not\leq E''$. So there is a unique line $L'$ through $p$ meeting $E'$ and
$E''$. Since $\cR$ is a partial $Z$-regulus, this line $L'$ is a directrix of
$\cR$ and, by the uniqueness of $L'$, no other directrix of $\cR$ can pass
through $p$. Hence our mapping is bijective.
\end{proof}
From now on we assume the bijection $\Phi:\bP(R)\to \cG$ to be given in terms
of $U$, $U'$, and $\lambda$ as in Remark~\ref{rem:isoP(R)}. We use the same
$U$, $U'$, and $\lambda$ to define the notions from Remark~\ref{rem:segre}.

\begin{prop}\label{prop:Sii}
The set $\csii$ comprising all subspaces $T^{(x,\,y)}$ from
\emph{(\ref{eq:T(x,y)})} is, on the one hand, the $\Phi$-image of a $Z$-chain
and, on the other hand, a $Z$-regulus.
\end{prop}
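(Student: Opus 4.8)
The statement has two halves: (i) $\csii$ is the $\Phi$-image of a $Z$-chain, and (ii) $\csii$ is a $Z$-regulus. I would prove (i) by a direct computation and then extract most of what is needed for (ii) from Remark~\ref{rem:segre} together with Lemma~\ref{lem:Zreg}. Recall that the $Z$-chain whose image we want is $\bP(Z)^\Phi$, i.e.\ the set $\{Z(x,y)^\Phi\mid (x,y)\in Z^2\setminus\{(0,0)\}\}$, where $Z$ is embedded in $R=\End_K(U)$ via $a\mapsto a\cdot\id$ as in (\ref{eq:a.id}). Plugging $(\alpha,\beta)=(x\cdot\id,y\cdot\id)$ into the defining formula (\ref{eq:Phi}) for $\Phi$ gives $R(x,y)^\Phi = \{u^{x\cdot\id}+u^{y\cdot\id\lambda}\mid u\in U\} = \{xu+yu^\lambda\mid u\in U\} = T^{(x,y)}$, which is exactly (\ref{eq:T(x,y)}). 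So (i) is essentially immediate once one checks that $Z(x,y)$, as a point of $\bP(R)$, really does have $R(x,y)$ as the corresponding cyclic submodule; this is where one uses that $x\cdot\id$ is central in $R$ and that for $(x,y)\ne(0,0)$ one of $x,y$ is a unit of $Z$, hence a unit of $R$, so that $R(x\cdot\id,y\cdot\id)\in\bP(R)$.

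For (ii) I would verify the three conditions of Definition~\ref{def:regulus} in turn. Condition \ref{def:regulus.1}: $|\csii|=|\bP(Z)|\ge 3$ since $|Z|\ge 2$, and the elements of $\csii$ are pairwise distant because distinct points of $\bP(Z)$ are distant in $\bP(Z)$, hence in $\bP(R)$ (the embedding takes distant to distant), hence by Remark~\ref{rem:isoP(R)}.\ref{rem:isoP(R).2} their $\Phi$-images are complementary in $\cG$; each lies in $\cG$ because $\Phi$ maps into $\cG$. Condition \ref{def:regulus.2}: by Remark~\ref{rem:segre}.\ref{rem:segre.1}, the transversal subspaces of $\csi$ are precisely the subspaces of second kind, and a subspace of first kind $L_u$ meets every $T^{(x,y)}$ in the single point $xu+yu^\lambda$ (nonzero since $(x,y)\ne(0,0)$ and $u\ne 0$); thus every $L_u$ is a line meeting all of $\csii$, i.e.\ a directrix of the partial $Z$-regulus $\csii$. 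Conversely, by Lemma~\ref{lem:Zreg} applied to any $E=T^{(x_0,y_0)}\in\csii$, through each point of $E$ there is exactly one directrix of $\csii$; since the subspaces $L_u$ already exhaust these (each point $x_0u+y_0u^\lambda$ of $E$ lies on $L_u$), the directrices of $\csii$ are precisely the subspaces of first kind. Hence if three distinct elements of $\csii$ meet a line $L$, then by Remark~\ref{rem:segre}.\ref{rem:segre.2} either $L$ is of first kind — and then it meets all of $\csii$ — or $L$ lies in some $T^{(x,y)}$; but a line contained in one element $T^{(x,y)}$ of a distant clique cannot meet two further distant elements of that clique in points (those would be points of $T^{(x,y)}$ lying in subspaces distant from it), so this case is excluded, and \ref{def:regulus.2} holds.

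It remains to check the maximality condition \ref{def:regulus.3}, which I expect to be the main obstacle. Suppose $\cR\supseteq\csii$ satisfies \ref{def:regulus.1} and \ref{def:regulus.2}; I must show $\cR=\csii$. Take any $E\in\cR$. Pick three distinct elements $T^{(x_1,y_1)},T^{(x_2,y_2)},T^{(x_3,y_3)}$ of $\csii\subseteq\cR$; they meet the directrix $L_u$ for every $u\in U\setminus\{0\}$, so by \ref{def:regulus.2} applied inside $\cR$ every element of $\cR$, in particular $E$, meets every $L_u$. Thus $E$ meets each subspace of first kind, and (using that $E$ is distant from, say, two elements of $\csii$, so $E\cap L_u$ is a single point for each $u$) the map $L_u\mapsto E\cap L_u$ sends $\csi$ into the point set of $E$; one checks it is onto the point set of $E$ and that it respects the structure sufficiently to force $E$ to be a transversal subspace of $\csi$, i.e.\ by Remark~\ref{rem:segre}.\ref{rem:segre.1} a subspace of second kind $T^{(x,y)}$. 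Finally $T^{(x,y)}\in\cG$ forces $(x,y)\ne(0,0)$ up to a common scalar, and being distant from the elements of $\csii$ forces $(x,y)\in Z^2$ rather than $K^2$ — the $K$-proportional copies $T^{(rx,ry)}$ with $r\notin Z$ are \emph{not} complementary to all $T^{(x',y')}$, which is precisely why one restricts to $Z^2$ in (\ref{eq:T(x,y)}). Hence $E\in\csii$, giving $\cR=\csii$. The delicate points to nail down are the surjectivity of $L_u\mapsto E\cap L_u$ onto the points of $E$ and the verification that the resulting $E$ is genuinely a transversal subspace (not merely a set meeting every $L_u$); here I would argue that $E$, being in $\cG$ and complementary to two elements of $\csii$, has ``the right size'', and that a subspace meeting every line of the transversal-complete family $\csi$ in exactly one point and contained in $Q$ must be transversal by the first part of Remark~\ref{rem:segre}.
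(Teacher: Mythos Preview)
Your arguments for part~(i) and for \ref{def:regulus.1} are correct and match the paper. The remaining two conditions, however, each contain a genuine gap.

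For \ref{def:regulus.2} you invoke Remark~\ref{rem:segre}.\ref{rem:segre.2}, but that remark has the hypothesis $L\subseteq Q$, which you never establish. A line meeting three elements of $\csii$ has three points in $Q$, but $Q$ is not a subspace, so this does not force $L\subseteq Q$. The paper avoids this entirely: given $L$ meeting $E_0,E_1,E\in\csii$, take the point $p=L\cap E$, pass the unique line $L_u\in\csi$ through $p$ (Remark~\ref{rem:segre}.\ref{rem:segre.1}), and observe that both $L$ and $L_u$ are lines through $p$ meeting the complementary subspaces $E_0$ and $E_1$. Such a line is unique, so $L=L_u$ and hence $L$ meets every element of $\csii$. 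No appeal to $Q$ is needed. (Incidentally, your use of Lemma~\ref{lem:Zreg} for $\csii$ inside this step is formally premature, since that lemma is stated for partial $Z$-reguli and you have not yet verified \ref{def:regulus.2}; the content of its proof would go through, but you should not cite it as stated.)

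For \ref{def:regulus.3} your outline is on the right track but leaves the key step unproved: you write ``one checks it is onto the point set of $E$'', which is exactly the nontrivial part. The clean way---and what the paper does---is to note that $\cR$ and $\csii$ have the \emph{same} directrices (any directrix of $\cR$ meets all of $\csii\subseteq\cR$; any directrix of $\csii$ meets three elements of $\cR$, hence all of $\cR$ by \ref{def:regulus.2}), and these are precisely $\csi$ by the argument for \ref{def:regulus.2}. Now Lemma~\ref{lem:Zreg}, applied to the partial $Z$-regulus $\cR$ and the element $E$, gives directly that $L\mapsto L\cap E$ is a bijection from $\csi$ to the points of $E$, i.e.\ $E$ is a transversal subspace of $\csi$. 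Then Remark~\ref{rem:segre}.\ref{rem:segre.1} yields $E\in\csii$ immediately. Your final paragraph about ``$(x,y)\in Z^2$ rather than $K^2$'' is spurious: Remark~\ref{rem:segre}.\ref{rem:segre.1} already asserts that the transversal subspaces of $\csi$ are \emph{exactly} the $T^{(x,y)}$ with $(x,y)\in Z^2\setminus\{(0,0)\}$, so no further restriction is needed.
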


\begin{proof}
(a) We consider the $Z$-chain $\cC$ which arises by embedding $\bP(Z)$ in
$\bP(R)$ via $Z(x,y)\mapsto R(x,y)$ (cf.\ Remark~\ref{rem:disadj}) and obtain
\begin{equation*}
    R(x,y)^\Phi = U^{(x\,\cdot\,\id,\,y\,\cdot\,\id)}\mbox{~~~~for all~~~~}
    (x,y)\in Z^2\setminus\{(0,0)\}.
\end{equation*}
Comparing (\ref{eq:Phi}) and (\ref{eq:a.id}) with (\ref{eq:T(x,y)}) yields
\begin{equation*}
    U^{(x\,\cdot\,\id,\, y\,\cdot\,\id)}=T^{(x,\,y)}\mbox{~~~~for all~~~~} (x,y)\in Z^{2}\setminus\{(0,0)\},
\end{equation*}
whence $\csii=\cC^\Phi$.

\par
(b) Taking into account Remark~\ref{rem:isoP(R)}.\ref{rem:isoP(R).2} and the
fact that the points of the $Z$-chain $\cC$ are mutually distant (see
Remark~\ref{rem:dist-chain}), the elements of $\csii$ turn out to be mutually
distant. Together with $3\leq|\cC|=|\csii|$ this shows that $\csii$ satisfies
condition \ref{def:regulus.1}.

\par
Suppose now that a line $L$ meets three distinct elements $E_0$, $E_1$, and $E$
of $\csii$. Then $L\cap E$ is a point and, by
Remark~\ref{rem:segre}.\ref{rem:segre.1}, there is a unique line $L_u\in\csi$
through $L\cap E$. The line $L_u$ meets all elements of $\csii$. Since there is
a unique line through $p$ which meets $E_0$ and $E_1$, we get $L=L_u$, and from
this $\csii$ is seen to satisfy condition \ref{def:regulus.2}.

\par
Finally, let $\cR$ be a partial $Z$-regulus which contains $\csii$. The partial
$Z$-reguli $\csii$ and $\cR$ have the same directrices, namely all lines that
meet three arbitrarily chosen elements of $\csii$ or, said differently, all
lines from $\csi$. We deduce from Lemma~\ref{lem:Zreg}, applied to an
arbitrarily chosen $X\in \cR$, that $X$ is a transversal subspace of $\csi$.
Now Remark~\ref{rem:segre}.\ref{rem:segre.1} gives $X\in\csii$, whence
$\cR=\csii$. This verifies that $\csii$ fulfills condition \ref{def:regulus.3}.
\end{proof}

\begin{thm}\label{thm:Zchain}
The $\Phi$-images of the $Z$-chains in $\Sigma(Z,R)$, $R=\End_K(U)$, are
exactly the $Z$-reguli in $\cG$.
\end{thm}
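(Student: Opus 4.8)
The plan is to deduce Theorem~\ref{thm:Zchain} from Proposition~\ref{prop:Sii} by a homogeneity (transitivity) argument. We already know from Proposition~\ref{prop:Sii} that the \emph{particular} set $\csii$ is both the $\Phi$-image of a $Z$-chain and a $Z$-regulus. The two families of objects in question---$\Phi$-images of $Z$-chains, and $Z$-reguli---are each single orbits under a suitable group action, and $\csii$ belongs to both orbits; if the relevant groups coincide (or at least have the same orbit on these configurations), we are done.

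First I would set up the group. By Remark~\ref{rem:isoP(R)}.\ref{rem:isoP(R).3}, $\Phi$ intertwines the action of $\GL(2,R)$ on $\bP(R)$ with the action of $\Aut_K(V)$ on $\cG$. The $Z$-chains of $\Sigma(Z,R)$ form, by definition, the orbit $\bP(Z)^{\GL(2,R)}$; hence their $\Phi$-images form a single orbit of $\Aut_K(V)$ on subsets of $\cG$, and one representative is $\csii$ by Proposition~\ref{prop:Sii}(a). So the $\Phi$-images of $Z$-chains are precisely $\csii^{\Aut_K(V)}$. Next I would show that the $Z$-reguli also form a single $\Aut_K(V)$-orbit, with the same representative $\csii$: since the defining conditions \ref{def:regulus.1}--\ref{def:regulus.3} (distant clique; the ``three elements meet a line'' condition; maximality) are all phrased purely in terms of incidence and the distant relation, any $g\in\Aut_K(V)$ carries a $Z$-regulus to a $Z$-regulus (it preserves $\dis$ and sends lines to lines and points to points). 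Thus $\csii^{\Aut_K(V)}$ consists of $Z$-reguli. Combining, every $\Phi$-image of a $Z$-chain is a $Z$-regulus.

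For the converse inclusion---every $Z$-regulus is the $\Phi$-image of a $Z$-chain---I would argue that an arbitrary $Z$-regulus $\cR$ can be moved onto $\csii$ by some element of $\Aut_K(V)$. Pick two distant elements $U_0,U_1\in\cR$ (they exist by \ref{def:regulus.1}) and a third element $U_2\in\cR$; by Lemma~\ref{lem:Zreg} the directrices of $\cR$ give a transversal family that makes $\cR$ look, after choosing $U=U_0$, $U'=U_1$, and an isomorphism $\lambda$, like a family of transversal subspaces of the $L_u$'s. The point is to choose $\lambda$ (and possibly rescale) so that $U_2$ becomes exactly one of the $T^{(x,y)}$; then Remark~\ref{rem:segre}.\ref{rem:segre.1} forces every element of $\cR$ to be transversal to $\csi$, hence to lie in $\csii$, and maximality \ref{def:regulus.3} of $\cR$ together with the fact (Proposition~\ref{prop:Sii}) that $\csii$ itself is a $Z$-regulus gives $\cR=\csii$. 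Pulling this back through $\Phi$ shows $\cR$ is the $\Phi$-image of the $Z$-chain $\bP(Z)^\psi$ for the corresponding $\psi\in\GL(2,R)$.

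I expect the main obstacle to be the converse direction, specifically the claim that a third element $U_2$ of an abstract $Z$-regulus can always be normalised to a subspace of second kind. What must be checked is that the ``coordinatisation'' supplied by $U_0$, $U_1$, $\lambda$ can genuinely be adjusted so that $U_2=T^{(x,y)}$ for suitable $(x,y)\in Z^2$---i.e., that the endomorphism of $U$ describing $U_2$ in the sense of \eqref{eq:Phi} is a \emph{central} scalar $a\cdot\id$ rather than an arbitrary unit of $R$. This is where the hypothesis $F=Z$ (as opposed to a general subfield of $K$) is essential, and it is presumably the content that makes \ref{def:regulus.2} do real work: the condition that three elements meeting a line forces \emph{all} to meet it is precisely what pins $U_2$ down to the central locus. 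In fact the cleanest route may be to invoke Theorem~\ref{thm:Zreg}, announced earlier in the introduction as a characterisation of $Z$-reguli, and read off from it that every $Z$-regulus arises as a $\csii$ for some admissible choice of $U,U',\lambda$; then the present theorem follows immediately by combining that characterisation with Proposition~\ref{prop:Sii}(a) and the generalisation of Remark~\ref{rem:isoP(R)}.\ref{rem:isoP(R).3} stated in the text.
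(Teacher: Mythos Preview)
Your overall strategy---use Proposition~\ref{prop:Sii} plus transitivity of $\Aut_K(V)$---is exactly the paper's approach, and your forward direction is essentially identical to part~(a) of the paper's proof.

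For the converse, however, you manufacture an obstacle that the paper dissolves in one line. The paper does not try to adjust $\lambda$ so that the third element $U_2$ becomes a $T^{(x,y)}$; instead it invokes the known fact that $\GL(2,R)$ acts transitively on ordered triples of mutually distant points (cf.\ \cite[Satz~1.3.8]{blu+he-05}). Via Remark~\ref{rem:isoP(R)}.\ref{rem:isoP(R).3} this means there is an $f\in\Aut_K(V)$ sending any three chosen elements of $\cR$ to $T^{(1,0)},T^{(0,1)},T^{(1,1)}$. Once $\cR^f$ and $\csii$ share these three elements, they share their directrices (namely $\csi$, since a line meeting $T^{(1,0)},T^{(0,1)},T^{(1,1)}$ is forced to be some $L_u$); then Lemma~\ref{lem:Zreg} and Remark~\ref{rem:segre}.\ref{rem:segre.1} give $\cR^f\subseteq\csii$, and maximality yields $\cR^f=\csii$. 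Your ``central scalar'' worry never arises: one is not hoping that $U_2$ happens to be described by a central unit for a pre-chosen $\lambda$; one is \emph{moving} the triple. (Equivalently, in your formulation you may simply take $\lambda$ to be the unique linear isomorphism $U_0\to U_1$ whose graph is $U_2$; then $U_2=T^{(1,1)}$ by construction, and no condition on centrality is needed.)

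Your proposed fallback---invoke Theorem~\ref{thm:Zreg}---would not help here. That theorem characterises $Z$-reguli in terms of $\dis$ and $\sim$; it does not assert that every $Z$-regulus equals $\csii$ for some admissible $(U,U',\lambda)$, which is what you would need. In the paper's logical order Theorem~\ref{thm:Zreg} also comes after Theorem~\ref{thm:Zchain}, so relying on it would at best reorder the exposition without gaining anything.
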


\begin{proof}
(a) By definition, all $Z$-chains comprise an orbit under the action of
$\GL(2,R)$. Because of Remark~\ref{rem:isoP(R)}.\ref{rem:isoP(R).3}, the
$\Phi$-images of $Z$-chains comprise an orbit under the action of the group
$\Aut_K(V)$. Clearly, all $f\in\Aut_K(V)$ map $Z$-reguli to $Z$-reguli. Hence
Proposition~\ref{prop:Sii} implies that the $\Phi$-image of any $Z$-chain of
$\bP(R)$ is a $Z$-regulus in $\cG$.

\par
(b) The group $\GL(2,R)$ acts transitively on the set of mutually distant
triplets of points of $\bP(R)$ (see \cite[Satz~1.3.8]{blu+he-05}). Due to
Remark~\ref{rem:isoP(R)}.\ref{rem:isoP(R).3}, we have a similar action of
$\Aut_K(V)$ on $\cG$. So, if we are given any $Z$-regulus $\cR$ then there
exists an $f\in\Aut_K(V)$ which takes three distinct (arbitrarily chosen)
elements of $\cR$ to $U=T^{(1,\,0)}$, $U'=T^{(0,\,1)}$, and $T^{(1,\,1)}$ (cf.\
also \cite[Lemma~2.1]{lue-80}). Clearly, $\cR^f$ is a $Z$-regulus. According to
Proposition~\ref{prop:Sii}, the set $\csii$ is a $Z$-regulus, too. The reguli
$\cR^f$ and $\csii$ have $T^{(1,\,0)}$, $T^{(0,\,1)}$, and $T^{(1,\,1)}$ in
common. This implies that $\cR^f$ and $\csii$ have the same set of directrices,
namely $\csi$. By Lemma~\ref{lem:Zreg}, any $X\in\cR^f$ is a transversal
subspace of $\csi$, so that Remark~\ref{rem:segre}.\ref{rem:segre.1} implies
$X\in\csii$. Now $\cR^f\subseteq\csii$ together with the maximality of $\cR^f$
yields $\cR^f=\csii$. By Proposition~\ref{prop:Sii}, the regulus $\csii$ is the
image of a $Z$-chain and, by virtue of $f^{-1}$, the same property holds for
$\cR$ according to (a).
\end{proof}

\begin{cor}\label{cor:orbit}
All $Z$-reguli of\/ $\cG$ comprise an orbit under the action of $\Aut_K(V)$.
Given any three mutually distant elements of $\cG$ there is a unique
$Z$-regulus containing them.
\end{cor}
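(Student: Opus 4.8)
The plan is to derive both assertions directly from Theorem~\ref{thm:Zchain} together with the transitivity of $\GL(2,R)$ on mutually distant triplets. First I would note that $Z$-reguli are, by Theorem~\ref{thm:Zchain}, exactly the $\Phi$-images of $Z$-chains, so statements about $Z$-reguli translate verbatim into statements about $Z$-chains in $\Sigma(Z,R)$, which in turn can be read off from the known behaviour of chain geometries over algebras recorded in Remark~\ref{rem:dist-chain} and the surrounding text.

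For the first sentence, I would argue that $\Aut_K(V)$ acts transitively on $Z$-reguli. By Remark~\ref{rem:isoP(R)}.\ref{rem:isoP(R).3}, the action of $\Aut_K(V)$ on $\cG$ is, via $\Phi$, the action of $\GL(2,R)$ on $\bP(R)$; and the $Z$-chains are by definition a single orbit $\bP(Z)^{\GL(2,R)}$. So the $\Phi$-images of $Z$-chains form one orbit under $\Aut_K(V)$, and by Theorem~\ref{thm:Zchain} these images are precisely the $Z$-reguli. Hence the $Z$-reguli form one $\Aut_K(V)$-orbit.

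For the second sentence, given three mutually distant elements $X_0,X_1,X_2$ of $\cG$, their $\Phi$-preimages are three mutually distant points of $\bP(R)$ by Remark~\ref{rem:isoP(R)}.\ref{rem:isoP(R).2}. Since $Z$ lies in the centre of $R$, the ring $R$ is a $Z$-algebra, and for such a chain geometry there is exactly one $Z$-chain through any three mutually distant points. Applying $\Phi$ and Theorem~\ref{thm:Zchain}, there is exactly one $Z$-regulus through $X_0,X_1,X_2$. Alternatively, and more self-containedly, existence follows from the transitivity used in part~(b) of the proof of Theorem~\ref{thm:Zchain} (map the triple to $T^{(1,0)}, T^{(0,1)}, T^{(1,1)}$ and pull $\csii$ back), while uniqueness follows because any $Z$-regulus $\cR$ through these three elements has, by Lemma~\ref{lem:Zreg}, the same directrices as $\csii$ after transporting by a suitable $f\in\Aut_K(V)$, and then Remark~\ref{rem:segre}.\ref{rem:segre.1} forces $\cR^f\subseteq\csii$, hence $\cR^f=\csii$ by maximality.

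The only delicate point is the uniqueness claim: it rests on $Z$ being central in $R$, which is exactly why we restricted to $F=Z$ rather than an arbitrary subfield of $K$ — for a non-central subfield there can be several chains through three mutually distant points, as noted before Remark~\ref{rem:dist-chain}. I would therefore state uniqueness in the self-contained form via Lemma~\ref{lem:Zreg} and Remark~\ref{rem:segre}.\ref{rem:segre.1}, so that the corollary does not silently import the "algebra" hypothesis from the chain-geometry literature but instead reuses the argument already assembled in the proof of Theorem~\ref{thm:Zchain}.
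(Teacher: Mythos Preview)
Your proposal is correct and mirrors the paper's implicit argument: the corollary is stated without proof precisely because both claims are read off from the proof of Theorem~\ref{thm:Zchain} (part~(a) gives the orbit statement via Remark~\ref{rem:isoP(R)}.\ref{rem:isoP(R).3}, and part~(b) together with the transitivity on mutually distant triples gives existence and uniqueness). Your self-contained uniqueness argument via Lemma~\ref{lem:Zreg} and Remark~\ref{rem:segre}.\ref{rem:segre.1} is exactly the reasoning already carried out in part~(b), so nothing new is needed.
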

By Proposition~\ref{prop:Sii} and Corollary~\ref{cor:orbit} the projectively
invariant properties of any $Z$-regulus $\cR$ can be read off from the
$Z$-regulus $\csii$. Below we state one such property. It is immediate from
(\ref{eq:1st}) and (\ref{eq:T(x,y)}) for the directrices of $\csii$, since
these are precisely the lines from $\csi$.

\begin{cor}\label{cor:subline}
Let $L$ be any directrix of a $Z$-regulus $\cR$. All points of $L$ which are
contained in an element of\/ $\cR$ form a $Z$-subline or, in other words, a
$Z$-chain of $L$.
\end{cor}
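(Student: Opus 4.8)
The plan is to reduce the general statement to the model regulus $\csii$ by means of Corollary~\ref{cor:orbit}, and then to make a short explicit computation there. More precisely, I would argue as follows.

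Let $\cR$ be a $Z$-regulus of $\cG$ and $L$ a directrix of $\cR$. By Corollary~\ref{cor:orbit} all $Z$-reguli form a single $\Aut_K(V)$-orbit, so there is an $f\in\Aut_K(V)$ with $\cR^f=\csii$ (using the $U$, $U'$, $\lambda$ fixed before Proposition~\ref{prop:Sii}). A collineation of the projective space on $V$ maps lines to lines, incidence to incidence, and carries $Z$-sublines to $Z$-sublines; hence it suffices to prove the claim for the concrete regulus $\csii$ and an arbitrary directrix $L$ of $\csii$. By the last paragraph of the proof of Proposition~\ref{prop:Sii} (or by Lemma~\ref{lem:Zreg} together with Remark~\ref{rem:segre}), the directrices of $\csii$ are precisely the lines $L_u\in\csi$ from (\ref{eq:1st}). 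So fix $u\in U\setminus\{0\}$ and put $L=L_u=\{ru+su^\lambda\mid r,s\in K\}$.

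Now the computation is immediate from the definitions. A point $p\le L_u$ has the form $p=K(ru+su^\lambda)$ with $(r,s)\in K^2\setminus\{(0,0)\}$; and $p$ lies on the element $T^{(x,y)}\in\csii$ (with $(x,y)\in Z^2\setminus\{(0,0)\}$) if and only if $ru+su^\lambda$ is a scalar multiple of $xu+yu^\lambda$, i.e.\ if and only if $K(r,s)=K(x,y)$ in $K^2$. Hence the points of $L_u$ that lie on some element of $\csii$ are exactly the points $K(xu+yu^\lambda)$ with $(x,y)\in Z^2\setminus\{(0,0)\}$. Under the coordinatisation of $L_u$ by $\bP(K)=K(1,0)^{\GL(2,K)}$ sending $K(ru+su^\lambda)\mapsto K(r,s)$, this set is precisely the image of $\bP(Z)$, i.e.\ a $Z$-subline (a $Z$-chain) of $L_u$, which is what the corollary asserts. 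Transporting back via $f^{-1}$ shows that the points of the original directrix $L$ that lie on an element of $\cR$ form a $Z$-chain of $L$.

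The only point requiring a word of care — and the one I regard as the main (minor) obstacle — is making sure that the notion of a $Z$-subline of a projective line is itself invariant under collineations of the projective space on $V$, so that the reduction via $f$ is legitimate; this is standard, since a $Z$-chain of a line $L$ is by definition an orbit of $\bP(Z)$ under $\GL(2,K)$ acting on $L$, and any collineation of $\bP(V)$ restricts on $L$ to a map which is semilinear up to the choice of coordinates and therefore permutes the $Z$-chains of $L$. Everything else is a direct unravelling of (\ref{eq:1st}) and (\ref{eq:T(x,y)}), exactly as indicated in the sentence preceding the statement.
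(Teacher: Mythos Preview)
Your proof is correct and follows essentially the same route as the paper: reduce to the model regulus $\csii$ via Corollary~\ref{cor:orbit}, identify the directrices of $\csii$ as the lines $L_u\in\csi$, and then read off from (\ref{eq:1st}) and (\ref{eq:T(x,y)}) that the intersection points form the standard $Z$-subline. Your explicit verification of the invariance of $Z$-sublines under the transport by $f\in\Aut_K(V)$ is a welcome elaboration (and in fact only the linear case is needed here), but otherwise this is exactly the argument the paper indicates in the sentence preceding the corollary.
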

A notion of \emph{regulus} is introduced for any projective space over a (not
necessarily commutative) field $K$ in \cite{knarr-95}. Furthermore it is
pointed out that according to \cite{herz-72a} the existence of such a regulus
implies $K$ being equal to its centre $Z$ (see also \cite{grund-81}). As a
matter of fact, our conditions (R1) and (R2) mean the same as the identically
named conditions in \cite[p.~55]{knarr-95} together with a richness condition
stated there. Also, our directrices are precisely the \emph{transversals} in
the sense of \cite{knarr-95}. Corollary~\ref{cor:subline} shows that our
directrices satisfy the remaining condition (R3) in \cite{knarr-95} if, and
only if, $K=Z$. Hence for a commutative field $K$ the reguli in the sense of
\cite{knarr-95} are precisely our $Z$-reguli. Likewise, the reguli from
\cite{blunck-00a} coincide with our $Z$-reguli in this particular case, but
fail to have this property in case of a non-commutative ground field $K$. The
last assertion follows immediately from \cite[Lemma~4.1]{blunck-00a}.
\par
We proceed with two lemmas which will be needed in order to show that
$Z$-reguli can be defined in terms of the distant graph $(\cG,\dis)$.

\begin{lem}\label{lem:Z1}
Let $W, E_0, E\in\cG$ with $W\sim E_0$, $E\dis E_0$, $W\notdis E$. Then $W\cap
E$ is a point.
\end{lem}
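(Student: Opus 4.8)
The plan is to work inside the direct sum $V=E\oplus E_0$, which is available because $E\dis E_0$. Put $M:=W\cap E_0$. Since $W\sim E_0$, condition~(\ref{eq:adjacent2}) gives $\dim(W/M)=\dim(E_0/M)=1$, so $M$ is a hyperplane of $W$ and of $E_0$. Let $\rho: V\to E$ be the projection with kernel $E_0$. Its restriction to $W$ has kernel $W\cap E_0=M$, so the image $W^\rho\le E$ is a point. As every $x\in W\cap E$ satisfies $x^\rho=x$, we get $W\cap E=(W\cap E)^\rho\le W^\rho$, and hence $\dim(W\cap E)\le 1$. Note that so far only $E\dis E_0$ and $W\sim E_0$ have been used.

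It remains to show $W\cap E\neq\{0\}$, and here the hypothesis $W\notdis E$ enters. I would argue by contraposition: assume $W\cap E=\{0\}$ and deduce $W\oplus E=V$, i.e.\ $W\dis E$. Choose $w\in W\setminus M$; then $w\notin E_0$ (otherwise $w\in W\cap E_0=M$), so $w^\rho\neq 0$. Put $w^\sigma:=w-w^\rho$, which lies in $E_0$ since $\rho$ is the projection along $E_0$. If $w^\sigma$ were in $W$, then $w^\rho=w-w^\sigma$ would be a non-zero element of $W\cap E$, contradicting our assumption; hence $w^\sigma\notin W$, so in particular $w^\sigma\neq 0$ and $w^\sigma\notin M$. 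Let $q$ be the point spanned by $w^\sigma$. From $M<M+q\le E_0$ and the fact that $M$ is a hyperplane of $E_0$ we get $M+q=E_0$. But $q\le W+E$ (because $w^\sigma=w-w^\rho$ with $w\in W$ and $w^\rho\in E$) and $M\le W$, so $E_0=M+q\le W+E$ and therefore $W+E=E+E_0=V$. Together with $W\cap E=\{0\}$ this yields $W\dis E$, contrary to $W\notdis E$.

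Combining the two parts, $W\cap E$ is non-zero and of dimension at most $1$, hence a point. I expect the second part to be the only real content: the subtlety is that one must not compare $\dim W$ with $\dim E_0$ (which is meaningless when $M$ is infinite-dimensional) but instead extract the explicit vectors $w^\rho,w^\sigma$ and invoke the maximality of the hyperplane $M$ inside $E_0$. Everything else is routine bookkeeping with the decomposition $V=E\oplus E_0$.
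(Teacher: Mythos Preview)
Your proof is correct and follows essentially the same approach as the paper's, just phrased in terms of the projection $\rho:V\to E$ along $E_0$ rather than in projective language: the paper's line $L$ through $p$ meeting $E_0$ and $E$ is exactly the span of your $w$, $w^\sigma$, $w^\rho$, and its intersection points $q_0\in E_0$, $q\in E$ correspond to your $w^\sigma$, $w^\rho$. The upper bound $\dim(W\cap E)\le 1$ via the rank of $\rho|_W$ is a slick alternative to the paper's argument that a line in $W\cap E$ would force $\dim\big(W/(W\cap E_0)\big)>1$, but the second half is identical in substance.
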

\begin{proof}
Assume that $W\cap E$ contains a line $L$. Then $L\cap E_0=0$, because of
$E\dis E_0$. This implies that $\dim (W /(W\cap E_0))>1$, a contradiction to
$W\sim E_0$.

\par
Assume now that $W\cap E= 0 $. Since $W\sim E_0$, we have that $W=(W\cap E_0) +
p$ for some point $p\leq W$ with $p\not\leq E_0$. Then $p\not\leq E$, since
$W\cap E=0$. So there is a unique line $L$ through $p$ meeting $E_0$ and $E$.
Let $q_0=E_0\cap L$ and $q=E\cap L$. Then $q\not\leq W$, since $W\cap E=0$. So
also $q_0\not\leq W$, whence $E_0=(W\cap E_0)+q_0$. Moreover, $q_0\leq
L=p+q\leq W+E$, and we get $V=E_0+E=(W\cap E_0)+q_0+E\leq W+E$, a contradiction
to $W\notdis E$.

\par
Consequently, $W\cap E$ has to be a point.
\end{proof}

\begin{lem}\label{lem:Z2}
Let $E_0,E_1,E_2\in\cG$ be mutually distant, and let $W\in\cG$ satisfy $W\sim
E_0$, $W\notdis E_1, E_2$. Let $p_i=E_i\cap W$, $i\in\{1,2\}$, be the unique
intersection points according to Lemma \emph{\ref{lem:Z1}}. Then the line
$L=p_1+p_2$ meets $E_0$, i.e., $L$ is the unique line through $p_1$ meeting
$E_0$ and $E_2$.
\end{lem}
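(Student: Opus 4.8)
The plan is to work inside the hyperplane $W\cap E_0$ of $W$ and exploit the structure of $\csi$-type directrices through the points $p_1$ and $p_2$. First I would observe that since $W\sim E_0$, there is a point $p_0\le W$ with $p_0\not\le E_0$ and $W=(W\cap E_0)+p_0$; in particular $W\cap E_0$ is a hyperplane of $W$. The key is to locate $p_1$ and $p_2$ relative to this hyperplane. Because $E_1\dis E_0$ and $E_2\dis E_0$, neither $p_1$ nor $p_2$ can lie in $E_0$, hence neither lies in $W\cap E_0$. So both $p_1$ and $p_2$ lie off the hyperplane $W\cap E_0$ of $W$. Since any two points of $W$ off a common hyperplane, together with that hyperplane, span the same subspace, we get that the line $L=p_1+p_2$ meets $W\cap E_0$ in a point — unless $L$ itself already lies in $W\cap E_0$, which is excluded, or unless $p_1=p_2$. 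I would dispatch the degenerate case $p_1=p_2$ separately: if $p_1=p_2=:p$ then $p\le E_1\cap E_2$, contradicting $E_1\dis E_2$ (as $E_1\cap E_2=0$); so $p_1\ne p_2$ and $L$ is a genuine line.

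Now the point $r:=L\cap(W\cap E_0)$ satisfies $r\le E_0$ and $r\le L=p_1+p_2$, so $L$ meets $E_0$. For the final clause, I would invoke the uniqueness of the transversal line: since $E_0\dis E_2$, we have $V=E_0\oplus E_2$, and $p_1\not\le E_0$ (shown above), $p_1\not\le E_2$ (because $p_1\le W$ and $W\notdis E_2$ would be violated — more precisely $p_1\le E_2$ would force $p_1\le W\cap E_2$, and by Lemma~\ref{lem:Z1} $W\cap E_2$ is the single point $p_2$, so $p_1=p_2$, already excluded). Hence there is a unique line through $p_1$ meeting both $E_0$ and $E_2$; the line $L$ meets $E_0$ (just shown) and meets $E_2$ in $p_2$, so $L$ is that unique line.

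The main obstacle is making the first step — ``$p_1,p_2$ off the hyperplane $W\cap E_0$ forces $L$ to meet that hyperplane'' — fully rigorous, since one must rule out that $L\subseteq W\cap E_0$ (impossible as $p_1\not\le W\cap E_0$) and handle $p_1=p_2$ (impossible by $E_1\dis E_2$). Once those two pathologies are excluded, a line through two points outside a hyperplane $H$ of a space $W$ necessarily meets $H$: the span of $H$ and $p_1$ is all of $W$, so $p_2\in H+p_1$, forcing $L=p_1+p_2$ to contain a point of $H$ (equivalently, $\dim(L)=2$ and $\dim(L\cap H)\ge \dim L+\dim H-\dim W=1$). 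I expect the rest — reading off that this intersection point lies in $E_0$, and the uniqueness-of-transversal argument — to be routine, using only that $E_0\dis E_2$ gives a direct sum decomposition and that $p_1$ avoids both summands.
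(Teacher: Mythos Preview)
Your argument is correct and rests on the same key observation as the paper: since $L=p_1+p_2\le W$ and $W\cap E_0$ is a hyperplane of $W$, the line $L$ must meet $W\cap E_0\le E_0$. The paper's proof consists of exactly those three lines; you spell out additional details (that $p_1\ne p_2$ via $E_1\dis E_2$, and the uniqueness clause), but note that your worry about locating $p_1,p_2$ off the hyperplane is unnecessary --- any line of $W$ meets any hyperplane of $W$ regardless.
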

\begin{proof}
By definition, the line $L$ belongs to $W$. Since $W\sim E_0$, we have that
$W\cap E_0$ is a hyperplane in $W$. So $L$ must meet $W\cap E_0$.
\end{proof}

\begin{thm}\label{thm:Zreg}
A subset $\cR$ of\/ $\cG$ is a $Z$-regulus if, and only if, the following
conditions are satisfied:

\begin{cond}{$\dis$}
\item\label{thm:Zreg.1} $\cR$ is a distant clique with at least three
    elements.

\item\label{thm:Zreg.2} If three mutually distinct elements $E_0,E_1,E_2\in\cR$
    and any $W\in\cG$ satisfy $W\sim E_0$ and $W\notdis E_1,E_2$ then $W\notdis
    E$ for all $E\in\cR$.

\item\label{thm:Zreg.3} $\cR$ is not properly contained in any subset of $\cG$
    satisfying conditions \emph{\ref{thm:Zreg.1}} and \emph{\ref{thm:Zreg.2}}.

\end{cond}
\end{thm}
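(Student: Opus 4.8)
The plan is to show that the three conditions (R1)--(R3) of Definition~\ref{def:regulus} are equivalent to the three conditions (\dis1)--(\dis3) above. Since (R1) and (\dis1) are literally the same statement, and (R3), (\dis3) have the same form relative to their preceding pair of conditions, it suffices to prove: \emph{for a distant clique $\cR$ with at least three elements, condition (R2) holds if and only if condition (\dis2) holds.} Indeed, once the pair \{(R1),(R2)\} is shown to describe the same family of subsets as \{(\dis1),(\dis2)\}, the maximality clauses (R3) and (\dis3) automatically single out the same maximal members, namely the $Z$-reguli. So the whole theorem reduces to this single equivalence, which I would state and prove as the body of the argument.

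For the direction \{(R1),(R2)\}$\Rightarrow$\{(\dis1),(\dis2)\}, assume $\cR$ satisfies (R1) and (R2), and take mutually distinct $E_0,E_1,E_2\in\cR$ together with $W\in\cG$ with $W\sim E_0$ and $W\notdis E_1$, $W\notdis E_2$. By Lemma~\ref{lem:Z1} (applied with $E_0$ and $E_1$, and again with $E_0$ and $E_2$) the intersections $p_1=W\cap E_1$ and $p_2=W\cap E_2$ are points. If $p_1=p_2$ then this single point lies on $E_1$ and $E_2$, and by Lemma~\ref{lem:Z1} one more application (or a direct argument) shows it also lies on $E_0$; in any case the point $p_1$ meets the three elements $E_0,E_1,E_2$, so (R2) forces every $E\in\cR$ to meet $p_1$, hence $W\cap E\supseteq\{p_1\}\neq 0$, i.e.\ $W\notdis E$. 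If $p_1\neq p_2$, set $L=p_1+p_2\le W$. By Lemma~\ref{lem:Z2}, $L$ meets $E_0$; and $L$ meets $E_1$ (in $p_1$) and $E_2$ (in $p_2$). Thus $L$ is a line meeting the three distinct elements $E_0,E_1,E_2\in\cR$, so (R2) gives that $L$ meets every $E\in\cR$; since $L\le W$ this yields $W\cap E\neq 0$, i.e.\ $W\notdis E$, which is (\dis2).

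For the converse \{(R1),(R2)\}$\Leftarrow$\{(\dis1),(\dis2)\}, assume (\dis1), (\dis2) and suppose a line $L$ meets three distinct $E_0,E_1,E_2\in\cR$; we must show $L$ meets every $E\in\cR$. The idea is to manufacture a suitable $W$. Pick a complement of $L$ inside a hyperplane arrangement so that there is a $W\in\cG$ with $L\le W$ and $W\sim E_0$: concretely, since $L$ meets $E_0$ in a point $q_0$ and $E_0\in\cG$, one can build $W$ by replacing one point of $E_0$ not on $L$ by the second point of $L$ --- more carefully, take $W=(E_0\cap H)+L$ where $H$ is a hyperplane of $V$ containing $E_0\cap$(complement of $q_0$ in a chosen splitting) and $L\not\le E_0$; then $W\in\cG$, $W\sim E_0$, and $L\le W$. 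Because $L\le W$ meets $E_1$ and $E_2$, we have $W\notdis E_1$ and $W\notdis E_2$. Now (\dis2) applies and gives $W\notdis E$ for all $E\in\cR$, i.e.\ $W\cap E\neq 0$. For $E$ distant from $E_0$, Lemma~\ref{lem:Z1} shows $W\cap E$ is a single point, and Lemma~\ref{lem:Z2} (with $E_0,E_1,E$ playing appropriate roles) identifies it as lying on the unique line through that point meeting $E_0$ --- one checks this forces $W\cap E\le L$, so $L$ meets $E$. A separate small argument handles the finitely many $E\in\cR$ not distant from $E_0$ (there can be none, actually, since $\cR$ is a distant clique and $E_0\in\cR$, so every other element of $\cR$ is distant from $E_0$ --- this removes that worry). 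Hence $L$ is a directrix and (R2) holds.

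The main obstacle I anticipate is the construction of $W$ in the converse direction: one needs, given a line $L$ meeting a fixed $E_0\in\cG$, an element $W\in\cG$ with $L\le W$ and $W\sim E_0$, and then one must verify that the point $W\cap E$ produced by (\dis2) actually lies on $L$ rather than merely somewhere in $W$. Pinning down that $W\cap E\le L$ is exactly where Lemmas~\ref{lem:Z1} and~\ref{lem:Z2} must be invoked in tandem: Lemma~\ref{lem:Z1} guarantees $W\cap E$ is a point $p$, and Lemma~\ref{lem:Z2}, applied to $E_0, E_i, E$ with the intersection points $p_i=W\cap E_i\in L$, shows the line $p_i+p$ meets $E_0$ and hence must be the unique line through $p_i$ in $W$ meeting $E_0$, which is $L$ itself; therefore $p\in L$. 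Handling the choice of $W$ uniformly in the infinite-dimensional case (so that $\dim(W/(W\cap E_0))=1$ genuinely holds) is the delicate point, but it is a local projective-geometry construction and poses no conceptual difficulty beyond care with dimensions.
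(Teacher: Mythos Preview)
Your reduction to showing that \{(R1),(R2)\} and \{($\dis$1),($\dis$2)\} cut out the same family of subsets is exactly right, and your forward direction (R2)$\Rightarrow$($\dis$2) is essentially the paper's argument. (The case $p_1=p_2$ is in fact vacuous: $p_1=p_2$ would be a point in $E_1\cap E_2=0$, impossible since $E_1\dis E_2$.)

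The converse, however, has a genuine gap. You build a single $W\in\cG$ with $L\le W$ and $W\sim E_0$, obtain the point $p=W\cap E$, and then assert that the line $p_1+p$ ``must be the unique line through $p_1$ in $W$ meeting $E_0$, which is $L$ itself''. This uniqueness claim is false: since $W\sim E_0$, the intersection $W\cap E_0$ is a hyperplane of $W$, so \emph{every} line of $W$ through $p_1$ meets $E_0$. What Lemma~\ref{lem:Z2} gives you is only that $p_1+p$ meets $E_0$ in some point of $W\cap E_0$; it does not force that point to be $p_0$, and hence does not force $p_1+p=L$. With a single $W$ there is no way to pin $p$ down to $L$.

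The paper repairs this by varying $W$ over an entire family: for \emph{each} hyperplane $H$ of $E_0$ containing $p_0$ one sets $W_H:=H+L$, so that $W_H\sim E_0$, $L\le W_H$, and $W_H\cap E_0=H$. Applying ($\dis$2) and Lemma~\ref{lem:Z2} to every $W_H$ shows that the fixed line $L'$ (the unique line through $p_1$ meeting $E_0$ and $E$) lies in each $W_H$, whence $L'\cap E_0$ lies in every such $H$. Since $\bigcap_{H}H=p_0$, this forces $L'\cap E_0=p_0$, so $L'=p_0+p_1=L$ and $L$ meets $E$. The missing idea in your argument is precisely this intersection over all hyperplanes through $p_0$; once you insert it, your outline becomes the paper's proof.
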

\begin{proof}
It suffices to show that the partial $Z$-reguli are precisely those subsets
$\cR$ of $\cG$ which satisfy \ref{thm:Zreg.1} and \ref{thm:Zreg.2}.

\par
First, let $\cR$ be a partial $Z$-regulus. Consider $E_0,E_1,E_2, W$ as in
\ref{thm:Zreg.2}. By Lemma \ref{lem:Z1}, the subspace $W\cap E_1$ is a point,
say $p_1$, and by Lemma \ref{lem:Z2}, $W$ contains the unique line $L$ through
$p_1$ meeting $E_0$ and $E_2$. Due to \ref{def:regulus.2}, each $E\in\cR$ meets
$L\leq W$, whence $E\notdis W$. So $\cR$ satisfies \ref{thm:Zreg.2} and clearly
also \ref{thm:Zreg.1}, since this condition coincides literally with
\ref{def:regulus.1}.

\par
Conversely, let $\cR$ be a subset of $\cG$ satisfying \ref{thm:Zreg.1} and
\ref{thm:Zreg.2}. Given any line $L$ that meets three distinct elements of
$\cR$, say $E_0, E_1,E_2$, we let $p_i=E_i\cap L$ for $i\in\{0,1,2\}$. Consider
the set
\begin{equation*}
    \cH:=\{H \mid p_0\leq H\leq E_0,\;\;\dim (E_0/H)=1\}.
\end{equation*}
This is the set of all hyperplanes of $E_0$ containing $p_0$. For each
$H\in\cH$ let $W_H=H+L$. Each $W_H$ belongs to the star $\cG[H\rangle$.
Consequently, $W_H$ satisfies $W_H\sim E_0$. Furthermore, for $i\in\{1,2\}$ we
have $W_H\notdis E_i$, since $p_i=L\cap E_i\leq W_H\cap E_i$.

\par
Let now $E$ be an arbitrary element of $\cR$ different from $E_0, E_1$. As
$\cR$ satisfies \ref{thm:Zreg.2}, we have that $W_H\notdis E$ holds for all
$W_H$ with $H\in \cH$. So by Lemma \ref{lem:Z2}, applied to $E_0, E_1, E$, we
obtain that all $W_H$ also contain the unique line $L'$ through $p_1$ meeting
$E_0$ and $E$. Since $p_0=\bigcap_{H\in\cH} H$, we have that $L'=L$. This
implies that $\cR$ satisfies \ref{def:regulus.2}, and clearly
\ref{def:regulus.1} is satisfied, too.
\end{proof}

\begin{thm}\label{thm:defbar}
The $\Phi$-images of $Z$-chains or, said differently, the $Z$-reguli can be
defined in terms of the distant graph $(\cG,\dis)$.
\end{thm}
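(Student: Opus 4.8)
The plan is to assemble the definability statement from the characterisations already established.  First I would recall that, by Theorem~\ref{thm:penadj} together with Proposition~\ref{prop:adj-cliques}, the pencils and hence the collinearity of the Grassmann space $(\cG,\fP)$ are definable in terms of the adjacency relation $\sim$ alone.  Next, Remark~\ref{rem:disadj}.\ref{rem:disadj.1} (the cited \cite[Thm.~3.2]{blu+h-03b}) provides an explicit first-order description of $\sim$ purely in terms of the distant relation $\dis$: two distinct $A,B\in\cG$ are adjacent if and only if there is $C\in\cG\setminus\{A,B\}$ such that every $X\dis C$ satisfies $X\dis A$ or $X\dis B$.  Substituting this into the clause $\notdis$ (which is simply the negation of $\dis$) shows that each of the hypotheses occurring in condition~\ref{thm:Zreg.2} — namely $W\sim E_0$, $W\notdis E_1$, $W\notdis E_2$, and $W\notdis E$ — is expressible in terms of $\dis$.

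Now I would invoke Theorem~\ref{thm:Zreg}, which says that a subset $\cR\subseteq\cG$ is a $Z$-regulus exactly when it satisfies \ref{thm:Zreg.1}, \ref{thm:Zreg.2}, and \ref{thm:Zreg.3}.  Condition \ref{thm:Zreg.1} only mentions $\dis$; condition \ref{thm:Zreg.2} mentions $\dis$ and $\sim$, and by the previous paragraph the occurrence of $\sim$ can be rewritten via $\dis$; condition \ref{thm:Zreg.3} is the maximality clause relative to \ref{thm:Zreg.1} and \ref{thm:Zreg.2}, so it too becomes a statement about $\dis$ once the first two are.  Hence membership in the family of $Z$-reguli is determined by the graph $(\cG,\dis)$.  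Finally, Theorem~\ref{thm:Zchain} identifies the $Z$-reguli with the $\Phi$-images of the $Z$-chains in $\Sigma(Z,R)$, so the same definability conclusion holds for those images.

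Concretely, the proof text would read roughly: \emph{By Theorem~\ref{thm:Zreg}, $\cR\subseteq\cG$ is a $Z$-regulus precisely when \ref{thm:Zreg.1}, \ref{thm:Zreg.2}, and \ref{thm:Zreg.3} hold.  Condition \ref{thm:Zreg.1} is stated solely in terms of $\dis$.  In condition \ref{thm:Zreg.2} the only ingredient other than $\dis$ (via $\notdis$) is the relation $\sim$, and by Remark~\ref{rem:disadj}.\ref{rem:disadj.1} we may replace ``$W\sim E_0$'' by the $\dis$-statement ``$W\neq E_0$ and there is $C\in\cG\setminus\{W,E_0\}$ with $X\dis W$ or $X\dis E_0$ whenever $X\dis C$''.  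Thus \ref{thm:Zreg.2}, and with it the maximality clause \ref{thm:Zreg.3}, is expressed through $\dis$ alone.  The claim for the $\Phi$-images of $Z$-chains now follows from Theorem~\ref{thm:Zchain}.\/}  Since all the heavy lifting has been done in the earlier theorems, there is no real obstacle left; the only thing to be careful about is to check that the substitution of the $\dis$-description of $\sim$ into \ref{thm:Zreg.2} does not accidentally quantify over elements already bound (it does not, since the witness $C$ in Remark~\ref{rem:disadj}.\ref{rem:disadj.1} is a fresh existentially quantified vertex), and to note explicitly that none of the three conditions secretly uses anything beyond $\dis$.
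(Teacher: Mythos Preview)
Your argument is correct and follows essentially the same route as the paper's proof: invoke Theorem~\ref{thm:Zreg}, observe that \ref{thm:Zreg.1}--\ref{thm:Zreg.3} only involve $\dis$ and $\sim$, replace $\sim$ by its $\dis$-description from Remark~\ref{rem:disadj}.\ref{rem:disadj.1}, and then appeal to Theorem~\ref{thm:Zchain} for the identification with $\Phi$-images of $Z$-chains. The opening remark about Theorem~\ref{thm:penadj} and Proposition~\ref{prop:adj-cliques} is harmless but unnecessary here, since pencils play no role in conditions \ref{thm:Zreg.1}--\ref{thm:Zreg.3}.
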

\begin{proof}
This is immediate from Theorems~\ref{thm:Zchain} and \ref{thm:Zreg}, since the
formulation of \ref{thm:Zreg.2} only uses the relations $\dis$ and $\sim$, the
latter of which can be described with $\dis$ alone according to
Remark~\ref{rem:disadj}.
\end{proof}

\section{Consequences}\label{se:conseq}

This final section is devoted to the automorphism groups of the various
structures on $\cG$. The following corollaries are based on the observation
that two notions on $\cG$ give rise to the same automorphisms on $\cG$ if, and
only if, each of these notions is definable in terms of the other. Our first
result is a consequence of Theorem~\ref{thm:penadj} and the remarks preceding
that theorem:

\begin{cor}\label{cor:grassm-auto}
The automorphisms of the Grassmann graph $(\cG,\sim)$ are precisely the
collineations of the Grassmann space $(\cG,\fP)$.
\end{cor}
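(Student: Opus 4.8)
The plan is to exploit the two reciprocal definability facts recorded just before and in Theorem~\ref{thm:penadj}: on the one hand, two distinct elements of $\cG$ are adjacent if, and only if, they lie in a common pencil, so $\sim$ is definable in terms of $\fP$; on the other hand, by Theorem~\ref{thm:penadj} the pencils are precisely the sets with more than one element that arise as intersections of two distinct maximal adjacency cliques, so $\fP$ is definable in terms of $\sim$. The corollary is then the instance of the general principle stated at the beginning of this section: two notions on $\cG$ that are each definable in terms of the other give rise to the same automorphisms.

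First I would treat the forward inclusion. Let $f$ be a collineation of $(\cG,\fP)$, i.e.\ a bijection of $\cG$ with $\fP^f=\fP$. For distinct $X,Y\in\cG$ one has $X\sim Y$ if, and only if, some pencil contains both $X$ and $Y$; applying $f$ and its inverse, this happens if, and only if, some pencil contains both $f(X)$ and $f(Y)$, i.e.\ if, and only if, $f(X)\sim f(Y)$. Hence $f$ is an automorphism of the Grassmann graph $(\cG,\sim)$.

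Conversely, let $g$ be an automorphism of $(\cG,\sim)$. Since $g$ is a bijection preserving adjacency in both directions, it maps adjacency cliques to adjacency cliques and respects inclusion among subsets of $\cG$; consequently $g$ permutes the maximal adjacency cliques. Therefore $g$ also permutes the sets obtained as intersections of two \emph{distinct} maximal adjacency cliques, and it clearly preserves the property of having more than one element. By Theorem~\ref{thm:penadj} these sets are exactly the pencils, so $\fP^g=\fP$ (the same argument applied to $g^{-1}$ gives the reverse inclusion), i.e.\ $g$ is a collineation of $(\cG,\fP)$. Combining the two inclusions yields the claim. There is no real obstacle here; the only point requiring (routine) care is to check that each of the two definability statements is used in both directions, so that automorphisms are matched up exactly rather than just one containment being obtained.
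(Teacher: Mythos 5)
Your proposal is correct and follows exactly the paper's route: the corollary is derived from the mutual definability of $\sim$ and $\fP$ (adjacency means lying in a common pencil, and by Theorem~\ref{thm:penadj} pencils are the at-least-two-element intersections of two distinct maximal adjacency cliques), which the paper invokes via its general observation at the start of Section~\ref{se:conseq}. You merely spell out the routine two inclusions that the paper leaves implicit, and you correctly note that both directions of each definability statement are needed.
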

This corollary, which is part of the ``dimension-free'' theory, allows us to
draw several conclusions: Under any automorphism of the Grassmann graph maximal
adjacency cliques are preserved in both directions and, from
Corollary~\ref{cor:grassm-auto}, so are pencils. As mentioned in
Section~\ref{se:grass}, any star $\cG[M\rangle$ and any top $\cG \langle N]$ is
a singular subspace of $(\cG,\fP)$ which is isomorphic to the projective space
on $V/M$ and $N^*$ (the dual of $N$), respectively. However, for a closer
analysis we have to distinguish two cases:

\par
In the finite-dimensional case the automorphisms of the Grassmann graph are
precisely those bijections of $\cG$ onto itself which stem from semilinear
isomorphisms of $V$ onto itself or onto its dual $V^*$ (provided that $K$
admits an antiautomorphism). The two possibilities can be distinguished by
exhibiting the images of stars and tops: In the first case stars and tops are
preserved, in the second case they are interchanged. This is part of the
celebrated theorem of \textsc{W.~L.~Chow} \cite{chow-49}. See also
\cite{huanglp-09a}, \cite{huang-98}, \cite{kosiorek+m+p-08}, \cite{pankov-04b},
\cite[3.2.1]{pankov-10a}, \cite[Thm.~3.52]{wan-96}, \cite{west-74}, and the
references therein for proofs of Chow's initial result and various
generalisations.

\par
For infinite dimension the situation is different though: If we are given any
star $\cG[M\rangle$ and any top $\cG \langle N]$ then, due to $\dim V=\infty$,
both $M$ and $N$ belong to $\cG$. Consequently, $N\cong M\cong V/M$ (as vector
spaces). However, $\dim (V/M) = \dim N<\dim N^*$. Hence the projective spaces
on $\cG[M\rangle$ and $\cG \langle N]$ are non-isomorphic. Given any
automorphism $\kappa:\cG\to\cG$ of the Grassmann graph the Fundamental Theorem
of Projective Geometry (see, among others, \cite[1.4]{pankov-10a}) implies that
the restriction of $\kappa$ to any star and any top arises from a semilinear
isomorphism of the underlying vector spaces. This in turn allows us to deduce
that \emph{stars have to go over to stars and tops must go over to tops}.
However, an analogue of Chow's theorem fails to hold, as follows from the
subsequent example:

\begin{exa}\label{exa:nodistiso}
Choose $f\in\Aut_K(V)$ such that some $A\in\cG$ is mapped to $A^f\sim A$.
Define $\kappa:\cG\to\cG$ by $X^\kappa =X^f$ for all $X$ in the connected
component of $A$ and $X^\kappa:=X$ otherwise. This $\kappa$ is an automorphism
of the Grassmann graph, but it does not stem from any semilinear automorphism,
say $g$, of $V$. For, if there were such a $g$ then, on the one hand, we would
have $A^g=A^f\neq A$. On the other hand, all subspaces $Y\le A$ with
$\dim(A/Y)=1$ belong to $\cG$, but not to the connected component of $A$ by
(\ref{eq:zsh}). Therefore we would have $Y^g=Y$ and, $A$ being the union of all
such $Y$s, this would imply $A^g=A$, an absurdity. As a matter of fact our
example shows even more: The given $\kappa$ cannot be induced by any bijection
$g:V\to V$ such that $g$ and $g^{-1}$ preserve subspaces belonging to $\cG$,
let alone $g$ being semilinear.
\end{exa}
The previous example is based on the fact that the Grassmann graph is
disconnected precisely when $\dim V=\infty$. (See Remark~\ref{rem:disconn}.)
Without going into details let us just mention that the related algebraic
result about $R=\End_K(U)$ is as follows: All linear endomorphisms of $U$ with
finite rank comprise a proper two-sided ideal of $R$ precisely when $\dim
U=\infty$. See, among others, \cite[p.~164]{ander+f-74},
\cite[pp.~197--199]{baer-52}, and \cite{orsatti+r-95}.

\par
In the infinite-dimensional case an explicit description of all automorphisms
of the Grassmann graph $(\cG,\sim)$ seems to be unknown. On the other hand, an
analogue of Chow's theorem holds for the automorphism group of the Grassmann
graph formed by all subspaces of a fixed \emph{finite} dimension of an
infinite-dimensional vector space \cite{lim-10}.

\par
We now turn to the distant graph $(\cG,\dis)$. Let us write $\fR$ for the set
of all $Z$-reguli in $\cG$. Then the (non-linear) incidence geometry
$(\cG,\fR)$ is a model of the chain geometry $\Sigma(Z,R)$, and we call it the
\emph{space of $Z$-reguli} on $\cG$. This generalises a notion from
\cite[Kap.~10]{blu+he-05}. By specialising Remark~\ref{rem:dist-chain} to
$F=Z$, we see that two distinct elements of $\bP(R)$ are distant if, and only
if, they are on a common $Z$-chain. Together with Theorem~\ref{thm:Zchain} and
Theorem~\ref{thm:defbar} we therefore obtain:

\begin{cor}\label{cor:reguli-auto}
The automorphisms of the distant graph $(\cG,\dis)$ are precisely the
automorphisms of the space $(\cG,\fR)$ of\/ $Z$-reguli.
\end{cor}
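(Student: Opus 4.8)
The plan is to deduce Corollary~\ref{cor:reguli-auto} from the general principle recalled at the start of Section~\ref{se:conseq}: two structures on $\cG$ have the same automorphism group precisely when each is definable in terms of the other. Concretely, I would argue that the family $\fR$ of $Z$-reguli is definable in terms of the relation $\dis$, and conversely that $\dis$ is definable in terms of $\fR$. Once both directions are in place, every bijection of $\cG$ preserving $\dis$ in both directions also preserves $\fR$ in both directions, and vice versa, so the two automorphism groups coincide.

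The forward direction, that $\fR$ is definable from $(\cG,\dis)$, is exactly Theorem~\ref{thm:defbar}: condition \ref{thm:Zreg.2} is phrased using only $\dis$ and $\sim$, and $\sim$ itself is expressible via $\dis$ by Remark~\ref{rem:disadj}.\ref{rem:disadj.1}; together with \ref{thm:Zreg.1} and the maximality clause \ref{thm:Zreg.3}, Theorem~\ref{thm:Zreg} then characterises $Z$-reguli purely in the language of $(\cG,\dis)$. So I would simply cite Theorem~\ref{thm:defbar} for this half. For the converse direction I would use Corollary~\ref{cor:orbit} together with the specialisation of Remark~\ref{rem:dist-chain} to $F=Z$ (and Theorem~\ref{thm:Zchain}, which identifies $Z$-reguli with $\Phi$-images of $Z$-chains): two distinct elements $X,Y\in\cG$ are distant if and only if they lie on a common $Z$-regulus. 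Indeed, if $X\dis Y$ one picks any third element distant from both (such exists since $\bP(R)$ is not a single chain) and applies Corollary~\ref{cor:orbit} to get a $Z$-regulus through the three; conversely any two distinct elements of a $Z$-regulus are distant by condition \ref{def:regulus.1}. Hence membership in a common $Z$-regulus recovers $\dis$.

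Having established mutual definability, the conclusion is formal: an automorphism of $(\cG,\dis)$ is a bijection $\kappa$ of $\cG$ with $X\dis Y\iff X^\kappa\dis Y^\kappa$; since $\fR$ is definable from $\dis$, such a $\kappa$ maps $Z$-reguli to $Z$-reguli in both directions, i.e.\ is an automorphism of $(\cG,\fR)$. The reverse implication is identical, using that $\dis$ is definable from $\fR$. I would then remark that this also yields the automorphism group of the chain geometry $\Sigma(Z,R)$ via the isomorphism $\Phi$ of Remark~\ref{rem:isoP(R)}.

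The only real subtlety — and the point I would be careful about — is the tiny existence step in the converse direction: given $X\dis Y$ one needs a third element of $\cG$ distant from both in order to invoke Corollary~\ref{cor:orbit}. This is harmless because $\bP(R)$ always contains a mutually distant triple (equivalently $|\cC|\ge 3$ for a $Z$-chain $\cC$, as already used in Proposition~\ref{prop:Sii}), but it is worth stating explicitly so that the ``if and only if'' characterisation of $\dis$ via $\fR$ is not vacuous. Everything else is a direct assembly of Theorems~\ref{thm:Zchain}, \ref{thm:defbar} and Corollary~\ref{cor:orbit}, so I expect no genuine obstacle.
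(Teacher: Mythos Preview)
Your proposal is correct and matches the paper's argument: Theorem~\ref{thm:defbar} gives one direction, while Remark~\ref{rem:dist-chain} specialised to $F=Z$ together with Theorem~\ref{thm:Zchain} gives the other. The paper simply quotes Remark~\ref{rem:dist-chain} for the equivalence ``distant iff on a common $Z$-chain/$Z$-regulus'', so your detour through Corollary~\ref{cor:orbit} and the explicit existence of a third distant element is harmless but unnecessary.
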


\begin{cor}\label{cor:chain-auto}
The automorphisms of the distant graph $(\bP(R),\dis)$ are precisely the
automorphisms of the chain geometry $\Sigma(Z,R)$.
\end{cor}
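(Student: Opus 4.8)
The plan is to deduce this from Corollary~\ref{cor:reguli-auto} by transporting structure along the bijection $\Phi:\bP(R)\to\cG$ of Remark~\ref{rem:isoP(R)}. The first step is to record that $\Phi$ is simultaneously an isomorphism of two kinds of structure. On the one hand, by Remark~\ref{rem:isoP(R)}.\ref{rem:isoP(R).2}, points $p,q\in\bP(R)$ are distant exactly when $p^\Phi,q^\Phi$ are distant in $\cG$; thus $\Phi$ is an isomorphism of the distant graphs $(\bP(R),\dis)$ and $(\cG,\dis)$. On the other hand, by Theorem~\ref{thm:Zchain}, $\Phi$ carries the block set $\fC(Z,R)$ of the chain geometry $\Sigma(Z,R)$ bijectively onto the set $\fR$ of all $Z$-reguli; thus $\Phi$ is an isomorphism of the incidence geometries $\Sigma(Z,R)=(\bP(R),\fC(Z,R))$ and $(\cG,\fR)$.

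Consequently, conjugation by $\Phi$, that is the map $\sigma\mapsto\Phi\circ\sigma\circ\Phi^{-1}$ on permutations, is one and the same group isomorphism between the symmetric groups on $\cG$ and on $\bP(R)$, and it restricts to a group isomorphism $\Aut(\cG,\dis)\to\Aut(\bP(R),\dis)$ and, at the same time, to a group isomorphism $\Aut(\cG,\fR)\to\Aut(\Sigma(Z,R))$. Corollary~\ref{cor:reguli-auto} asserts that $\Aut(\cG,\dis)$ and $\Aut(\cG,\fR)$ coincide as permutation groups of $\cG$. Pushing this equality forward through the conjugation isomorphism gives $\Aut(\bP(R),\dis)=\Aut(\Sigma(Z,R))$, which is the assertion.

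No serious obstacle is expected here; all the substantive work has already been done in Theorems~\ref{thm:Zchain}, \ref{thm:Zreg}, \ref{thm:defbar} and Corollary~\ref{cor:reguli-auto}. The only point requiring a word of care is that the two conjugation isomorphisms above are literally the same bijection of permutation groups, so that the coincidence of the two automorphism groups on the $\cG$-side transfers faithfully to $\bP(R)$; this is automatic because it is the single map $\Phi$ that intertwines both structures simultaneously. Equivalently, one may argue directly via the criterion stated at the beginning of Section~\ref{se:conseq}: specialising Remark~\ref{rem:dist-chain} to $F=Z$ shows that $\dis$ is definable in terms of $Z$-chains on $\bP(R)$, while Theorem~\ref{thm:defbar}, transported along $\Phi$, shows that $Z$-chains are definable in terms of $\dis$; mutual definability yields the equality of the two automorphism groups.
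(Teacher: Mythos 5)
Your proof is correct and follows essentially the same route as the paper: the paper also obtains this corollary from the mutual definability of $\dis$ and $Z$-chains (Remark~\ref{rem:dist-chain} with $F=Z$ in one direction, Theorems~\ref{thm:Zchain} and \ref{thm:defbar} in the other), transported through the bijection $\Phi$, which is exactly the content of your argument. Your explicit conjugation-by-$\Phi$ formulation and your closing ``equivalently'' paragraph are just two phrasings of the paper's own reasoning, so nothing further is needed.
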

According to Remark~\ref{rem:disadj}.\ref{rem:disadj.1} any automorphism of the
distant graph is also an automorphism of the Grassmann graph on $\cG$. There
are two cases:

\par
In the finite-dimensional case a complete description of the automorphisms of
$(\cG,\dis)$ can be derived from Chow's theorem: See \cite[Thm.~4.4]{blu+h-03b}
and cf.\ \cite{huanglp-09a}, \cite{huang-10a}, \cite{huang+h-08a},
\cite{lim-10} for generalisations. Furthermore, the results from
\cite[Thm.~5.4]{blunck+h-05b} provide an explicit description of the
automorphisms of the chain geometry $\Sigma(Z,R)$, thereby avoiding the
richness condition appearing in the related result from
\cite[Kor.~4.3.10]{blu+he-05}.

\par
For infinite dimension of $V$ the only known automorphisms of the distant graph
on $\cG$ seem to be those which stem from semilinear automorphisms of $V$. Thus
in this case there remains the problem of finding all automorphisms of
$(\cG,\dis)$.

\subsubsection*{Acknowledgements} The first author was partially supported by a
grant from Vienna University of Technology (International Office). She thanks
the members of the Research Group Differential Geometry and Geometric
Structures at TU Vienna for their hospitality.


\begin{thebibliography}{10}\itemsep0pt

\bibitem{ander+f-74} F.W.\ Anderson and K.R. Fuller.
\newblock {\em Rings and categories of modules}.
\newblock Springer-Verlag, New York, 1974.
\newblock Graduate Texts in Mathematics, Vol. 13.

\bibitem{baer-52} R.~Baer.
\newblock {\em Linear Algebra and Projective Geometry}.
\newblock Academic Press, New York, 1952.

\bibitem{blunck-99} A.~Blunck.
\newblock Regular spreads and chain geometries.
\newblock {\em Bull.\ Belg.\ Math.~Soc.\ Simon Stevin}, 6:589--603, 1999.

\bibitem{blunck-00a} A.~Blunck.
\newblock Reguli and chains over skew fields.
\newblock {\em Beitr\"age Algebra Geom.}, 41:7--21, 2000.

\bibitem{blu+h-00a} A.\ Blunck and H.~Havlicek.
\newblock Extending the concept of chain geometry.
\newblock {\em Geom.\ Dedicata}, 83:119--130, 2000.

\bibitem{blu+h-01a} A.\ Blunck and H.~Havlicek.
\newblock The connected components of the projective line over a ring.
\newblock {\em Adv.\ Geom.}, 1:107--117, 2001.

\bibitem{blu+h-03b} A.\ Blunck and H.~Havlicek.
\newblock On bijections that preserve complementarity of subspaces.
\newblock {\em Discrete Math.}, 301:46--56, 2005.

\bibitem{blunck+h-05b} A.~Blunck and H.~Havlicek.
\newblock On distant-isomorphisms of projective lines.
\newblock {\em Aequationes Math.}, 69:146--163, 2005.

\bibitem{blu+he-05} A.~Blunck and A.~Herzer.
\newblock {\em Kettengeometrien}.
\newblock Shaker, Aachen, 2005.

\bibitem{chow-49} W.-L. Chow.
\newblock On the geometry of algebraic homogeneous spaces.
\newblock {\em Ann.\ of Math.}, 50(1):32--67, 1949.

\bibitem{grund-81} T.~Grundh\"ofer.
\newblock Reguli in {F}aserungen projektiver {R}\"aume.
\newblock {\em Geom.\ Dedicata}, 11:227--237, 1981.

\bibitem{havl-93a} H.~Havlicek.
\newblock Baer subspaces within {S}egre manifolds.
\newblock {\em Resultate Math.}, 23:322--329, 1993.

\bibitem{herz-72a} A.~Herzer.
\newblock Eine {V}erallgemeinerung des {S}atzes von {D}andelin.
\newblock {\em Elem. Math.}, 27:52--56, 1972.

\bibitem{herz-95} A.~Herzer.
\newblock Chain geometries.
\newblock In F.\ Buekenhout, editor, {\em Handbook of Incidence Geometry},
  pages 781--842. Elsevier, Amsterdam, 1995.

\bibitem{huanglp-09a} L.-P. Huang.
\newblock Diameter preserving bijections between {G}rassmann spaces over
  {B}ezout domains.
\newblock {\em Geom. Dedicata}, 138:1--12, 2009.

\bibitem{huang-98} W.-l. Huang.
\newblock Adjacency preserving transformations of {G}rassmann spaces.
\newblock {\em Abh.\ Math.\ Sem.\ Univ.\ Hamburg}, 68:65--77, 1998.

\bibitem{huang-10a} W.-l. Huang.
\newblock Bounded distance preserving surjections in the geometry of matrices.
\newblock {\em Linear Algebra Appl.}, 433(11-12):1973--1987, 2010.

\bibitem{huang+h-08a} W.-l. Huang and H.~Havlicek.
\newblock Diameter preserving surjections in the geometry of matrices.
\newblock {\em Linear Algebra Appl.}, 429(1):376--386, 2008.

\bibitem{knarr-95} N.~Knarr.
\newblock {\em Translation Planes}, volume 1611 of {\em Lecture Notes in
  Mathematics}.
\newblock Springer, Berlin, 1995.

\bibitem{kosiorek+m+p-08} J.\ Kosiorek, A.\ Matra\'{s}, and M.~Pankov.
\newblock Distance preserving mappings of {G}rassmann graphs.
\newblock {\em Beitr\"age Algebra Geom.}, 49(1):233--242, 2008.

\bibitem{lim-10} M.-H. Lim.
\newblock Surjections on {G}rassmannians preserving pairs of elements with
  bounded distance.
\newblock {\em Linear Algebra Appl.}, 432(7):1703--1707, 2010.

\bibitem{lue-80} H.~L\"uneburg.
\newblock {\em Translation Planes}.
\newblock Springer, Berlin, 1980.

\bibitem{orsatti+r-95} A.~Orsatti and N.~Rodin\`{o}.
\newblock On the endomorphism ring of an infinite-dimensional vector space.
\newblock In {\em Abelian groups and modules ({P}adova, 1994)}, volume 343 of
  {\em Math. Appl.}, pages 395--417. Kluwer Acad.\ Publ., Dordrecht, 1995.

\bibitem{pankov-04b} M.~Pankov.
\newblock Chow's theorem and projective polarities.
\newblock {\em Geom. Dedicata}, 107:17--24, 2004.

\bibitem{pankov-10a} M.~Pankov.
\newblock {\em Grassmannians of Classical Buildings}.
\newblock World Scientific, Singapore, 2010.

\bibitem{veld-85} F.D. Veldkamp.
\newblock Projective ring planes and their homomorphisms.
\newblock In R.\ Kaya, P.\ Plaumann, and K.\ Strambach, editors, {\em Rings and
  Geometry}. D.\ Reidel, Dordrecht, 1985.

\bibitem{veld-95} F.D. Veldkamp.
\newblock Geometry over rings.
\newblock In F.\ Buekenhout, editor, {\em Handbook of Incidence Geometry}.
  Elsevier, Amsterdam, 1995.

\bibitem{wan-96} Z.-X. Wan.
\newblock {\em Geometry of Matrices}.
\newblock World Scientific, Singapore, 1996.

\bibitem{west-74} R.~Westwick.
\newblock On adjacency preserving maps.
\newblock {\em Canad.\ Math.\ Bull.}, 17:403--405, 1974.
\newblock Correction, ibid.\ 623.

\end{thebibliography}
\small

\noindent Andrea Blunck\\
Fachbereich Mathematik\\
Universit\"{a}t Hamburg\\
Bundesstra{\ss}e 55\\
D-20146 Hamburg\\
Germany\\
\texttt{andrea.blunck@math.uni-hamburg.de}
\\~\\
\noindent
Hans Havlicek\\
Institut f\"{u}r Diskrete Mathematik und Geometrie\\
Technische Universit\"{a}t\\
Wiedner Hauptstra{\ss}e 8--10/104\\
A-1040 Wien\\
Austria\\
\texttt{havlicek@geometrie.tuwien.ac.at}


\end{document}